\documentclass[10pt]{article}

\usepackage[table]{xcolor}
\makeatletter
\let\insert@pcolumn\insert@column
\makeatother
\usepackage[margin=1in]{geometry}
\usepackage{amsmath,amssymb,amsthm,mathtools}
\usepackage{booktabs}
\usepackage{longtable}
\usepackage{makecell}
\usepackage{algorithm}
\usepackage{algpseudocode}
\usepackage{wrapfig}
\usepackage{microtype}
\usepackage{fontawesome5}
\usepackage{pifont}
\usepackage[hidelinks]{hyperref}
\usepackage[
  backend=biber,
  style=apa,
  natbib=true,
  maxcitenames=2,
  mincitenames=1,
  uniquename=false,
  sortcites=true
]{biblatex}
\DeclareFieldFormat{addendum}{#1\nopunct}
\renewbibmacro*{finentry}{\finentry\par}
\makeatletter
\@ifundefined{localrefcontext}{}{%
  \AtBeginRefsection{%
    \GenRefcontextData{sorting=ynt,sortingnamekeytemplatename=apasortcite}}%
  \AtEveryCite{%
    \localrefcontext[sorting=ynt,sortingnamekeytemplatename=apasortcite]}}
\makeatother
\usepackage{fancyhdr}
\usepackage[most]{tcolorbox}

\floatstyle{ruled}
\restylefloat{algorithm}

\newtheorem{theorem}{Theorem}

\newtheorem{lemma}{Lemma}
\newtheorem{corollary}[theorem]{Corollary}
\newtheorem{assumption}{Assumption}
\newtheorem{definition}{Definition}
\newtheorem{remark}{Remark}

\newcommand{\R}{\mathbb{R}}
\newcommand{\E}{\mathbb{E}}
\newcommand{\cF}{\mathcal{F}}
\newcommand{\cZ}{\mathcal{Z}}
\newcommand{\argmin}{\mathop{\rm arg\,min}}
\newcommand{\cmark}{\ding{51}}
\newcommand{\xmark}{\ding{55}}
\newcommand{\uniformtableformat}{%
  \renewcommand{\arraystretch}{1}%
  \setlength{\aboverulesep}{3pt}%
  \setlength{\belowrulesep}{3pt}%
  \setlength{\belowcaptionskip}{6pt}%
}
\colorlet{boxbordergreen}{green!45!black}
\hypersetup{
  colorlinks=true,
  linkcolor=boxbordergreen,
  citecolor=boxbordergreen,
  urlcolor=boxbordergreen,
  filecolor=boxbordergreen
}

\AtEveryCitekey{\color{boxbordergreen}}
\newrobustcmd*{\mkbibparensgreen}[1]{%
  \textcolor{boxbordergreen}{\mkbibparens{#1}}}
\DeclareCiteCommand{\parencite}[\mkbibparensgreen]
  {\usebibmacro{cite:init}%
   \usebibmacro{prenote}%
   \toggletrue{apa:inpcite}}
  {\usebibmacro{citeindex}%
   \usebibmacro{cite}%
   \usebibmacro{cite:post}}
  {}
  {\usebibmacro{postnote}%
   \togglefalse{apa:inpcite}}
\DeclareMultiCiteCommand{\parencites}[\mkbibparensgreen]{\parencite}
  {\setunit{\multicitedelim}}
\DeclareCiteCommand*{\parencite}[\mkbibparensgreen]
  {\usebibmacro{cite:init}%
   \usebibmacro{prenote}%
   \toggletrue{apa:inpcite}}
  {\usebibmacro{citeindex}%
   \usebibmacro{citeyear}%
   \usebibmacro{cite:post}%
   \togglefalse{apa:inpcite}}
  {}
  {\usebibmacro{postnote}}
\newtcolorbox{promptbox}{
  enhanced,
  breakable,
  colback=green!3!white,
  colframe=boxbordergreen,
  colbacktitle=boxbordergreen,
  coltext=black,
  coltitle=green!3!white,
  fonttitle=\bfseries,
  title={Homework problem context and documented prompt to ChatGPT~5.4 TE},
  boxrule=0.6pt,
  arc=1mm,
  left=1.5mm,
  right=1.5mm,
  top=1mm,
  bottom=1mm
}

\newtcolorbox{contextbox}{
  enhanced,
  breakable,
  colback=green!3!white,
  colframe=boxbordergreen,
  coltext=black,
  boxrule=0.6pt,
  arc=1mm,
  left=1.5mm,
  right=1.5mm,
  top=1mm,
  bottom=1mm
}

\newtcolorbox{keyinsightbox}{
  enhanced,
  colback=green!3!white,
  colframe=boxbordergreen,
  coltext=black,
  boxrule=0.6pt,
  arc=1mm,
  left=1.5mm,
  right=1.5mm,
  top=1mm,
  bottom=1mm,
  before skip=0.5\baselineskip,
  after skip=0.5\baselineskip
}
\title{\Large Improved Convergence Rates for Stochastic Multi-Gradient Descent \\which Close the Gap: A Proof by AI}
\author{\normalfont\normalsize
Lisha Chen\\
\normalfont\normalsize
University of Rochester,
Rochester, New York 14627\\
\normalfont\normalsize
lisha.chen@rochester.edu}
\date{}

\begin{document}
\maketitle

\begin{abstract}
For smooth nonconvex stochastic multi-objective optimization,
stochastic multi-gradient descent (SMG) computes an approximate steepest common descent direction of the objectives from stochastic gradients.  
Under standard assumptions, this note establishes 
a new convergence rate for SMG in terms of the 
squared Pareto-stationarity (PS) measure. With a constant stepsize 
and linearly growing mini-batches, the expected squared empirical PS measure at the algorithm's output is $\widetilde O(T^{-1})$ after $T$ iterations. This improves on the $\widetilde O(T^{-1/4})$ bound obtained by \citet{chen2024threeway} for linearly growing batches, without requiring bounded gradients. Here, $\widetilde O(\cdot)$ suppresses logarithmic factors.

\begin{keyinsightbox}
The key to the rate improvement is to 
exploit the Lipschitz continuity of the PS measure, defined by the norm of the multi-gradient descent algorithm (MGDA) direction, rather than the $(1/2)$-H{\"o}lder continuity of the MGDA direction used by~\citet{chen2024threeway}.
\end{keyinsightbox}

For variants of MoCo and MoCo+ with exact MGDA updates and constant batch sizes, the analysis gives expected squared empirical PS rates of $O(T^{-1/2})$ and $\widetilde O(T^{-2/3})$, respectively. These bounds match the corresponding single-objective momentum rates up to logarithmic factors.

The proof was accidentally discovered while the author was preparing homework for a graduate course: ChatGPT~5.4 Thinking Extended generated the initial proof strategy for the main SMG result in response to an author-written homework-solution prompt; the author then verified and reorganized the resulting argument. The appendices document the prompt, summarize the student submissions with LLM disclosures,
and present additional proofs and results on momentum-based stochastic MGDA.
\end{abstract}

\noindent{\bf Mathematics Subject Classification (2020):} 
90C29, 90C15, 90C26,  90C30.

\noindent{\bf Keywords:} multi-objective optimization, stochastic optimization,
multi-gradient descent algorithm (MGDA), Pareto stationarity, growing mini-batches, momentum, artificial intelligence (AI)-assisted proof.
\vspace{0.12in}

\section{Introduction}
\label{sec:introduction}

Multi-objective optimization (MOO) seeks solutions that balance several objectives~\citep{ehrgott2005}. Because objective gradients may conflict, a direction that decreases one objective can increase another. The multi-gradient descent algorithm (MGDA) addresses this issue by selecting the negative minimum-norm convex combination of the objective gradients~\citep{mukai1980,fliege2000steepest,desideri2012mgda}.
At a non-Pareto-stationary point, this direction decreases every objective
simultaneously and is also known as the steepest common descent direction
or the conflict-avoidant direction.
Other related multi-objective descent, projected-gradient, and Newton methods are
surveyed by~\citet{fukuda2014survey}.

Since full-batch objective gradients are often unavailable or too expensive in large-scale learning, \citet{mercier2018smgda} study a stochastic extension of MGDA for the expected objectives:
\begin{equation}
  \min_{x\in\R^p} F(x):=\big(f_1(x),\ldots,f_M(x)\big)^\top,
  \qquad
  f_m(x):=\E_{z}\big[f_{z,m}(x)\big],\quad m\in[M].
  \label{eq:mercier-stochastic-context}
\end{equation}
Here, $M$ is the number of objectives, $[M]:=\{1,\ldots,M\}$ is the objective index set, $x\in\R^p$ is the decision variable with dimension $p$, $z$ is a random data sample, $f_{z,m}(x)$ is the sample loss for objective $m$, and $\E_z$ denotes expectation with respect to the distribution of $z$.

Although the mini-batch gradient matrix is conditionally unbiased, the resulting MGDA direction is generally biased because the minimum-norm selection is nonlinear~\citep{mercier2018smgda,liu2024smg}.
The stochastic variants of MGDA  must control this bias to obtain convergence guarantees.
The vanilla stochastic multi-gradient descent (SMG) algorithm~\citep{liu2024smg}
addresses this by increasing the batch size at each iteration.
This note establishes a finite-time convergence-rate guarantee 
for the vanilla SMG algorithm,
improving on the prior rate of \citet{chen2024threeway} for linearly growing mini-batches without requiring bounded gradients.
ChatGPT~5.4 Thinking Extended (ChatGPT~5.4 TE) generated the initial proof strategy, which the author then verified and reorganized.
This note is organized as follows: Section~\ref{sec:problem-setting} presents the problem setting and notation, Section~\ref{sec:convergence} develops the convergence analysis, Section~\ref{sec:related-work} reviews related work, and Section~\ref{sec:conclusion} concludes. Appendix~\ref{app:stochastic-mgda-comparison} compares related work, Appendix~\ref{app:momentum-exact-mgda} extends the analysis to MoCo and MoCo+ with exact MGDA updates, Appendix~\ref{app:proof-discovery} documents the proof-discovery process, and Appendix~\ref{app:student-submissions} summarizes the student homework submissions.

\section{Problem setting and notation}
\label{sec:problem-setting}

Let $\cZ$ be the data space, and let $S=(z_1,\ldots,z_n)\in\cZ^n$ be a given training dataset. Recall that $[M]:=\{1,\ldots,M\}$, where $M\geq 2$. For each $m\in[M]$, let $f_{z,m}:\R^p\to\R$ denote the loss for datum $z$ and objective $m$, and define $F_z(x):=\big(f_{z,1}(x),\ldots,f_{z,M}(x)\big)^\top$. For a mini-batch $Z=(\zeta_1,\ldots,\zeta_{|Z|})$ sampled with replacement from $S$, define
\begin{equation}
 F_S(x):=\frac{1}{n}\sum_{i=1}^n F_{z_i}(x),
 \qquad
 F_Z(x):=\frac{1}{|Z|}\sum_{r=1}^{|Z|}F_{\zeta_r}(x).
 \label{eq:empirical-objective}
\end{equation}
For each $m\in[M]$, the scalar components of $F_S$ and $F_Z$ are
$f_{S,m}(x):=n^{-1}\sum_{i=1}^n f_{z_i,m}(x)$ and
$f_{Z,m}(x):=|Z|^{-1}\sum_{r=1}^{|Z|}f_{\zeta_r,m}(x)$, respectively.
The gradient matrices $\nabla F_S(x),\nabla F_Z(x)\in\R^{p\times M}$ have $m$-th columns $\nabla f_{S,m}(x)$ and $\nabla f_{Z,m}(x)$, respectively. Let
$\Delta^M:=\big\{\lambda\in\R^M:\lambda\geq 0,\ \mathbf{1}^\top\lambda=1\big\},$
where $\mathbf{1}$ is the all-ones vector. Throughout, $\langle\cdot,\cdot\rangle$ and $\|\cdot\|$ denote the Euclidean inner product and norm. For a matrix $A$, $\|A\|_2:=\max_{\|v\|=1}\|Av\|$ denotes its spectral norm and $\|A\|_F$ denotes its Frobenius norm.
Table~\ref{tab:notation} summarizes the notation used repeatedly below.
\vspace{-5mm}
\begin{table}[H]
\uniformtableformat
\setlength{\belowcaptionskip}{6pt}
\caption{Notation.}
\label{tab:notation}
\centering
\small
\begin{tabular}{m{0.24\linewidth}m{0.66\linewidth}}
\toprule
Notation & Description \\
\midrule
$x\in\R^p$ & Model parameter or decision variable. \\
$S, Z_t$ & Training dataset and mini-batch at iteration $t$. \\
$f_{S,m},f_{Z_t,m}$ & Empirical and mini-batch scalar objectives for objective $m$. \\
$F_S,F_{Z_t}$ & Empirical and mini-batch vector objectives. \\
$q_{t,m},Q_t$ & Mini-batch gradient $q_{t,m}:=\nabla f_{Z_t,m}(x_t)$ for objective $m$ and gradient matrix $Q_t:=\nabla F_{Z_t}(x_t)$. \\
$\lambda^\star_Q$ & A selected MGDA weight in $\argmin_{\lambda\in\Delta^M}\|Q\lambda\|^2$. \\
$d_Q$ & Conflict-avoidant direction $-Q\lambda^\star_Q$. \\
$r(Q)=\|d_Q\|$ & Pareto-stationarity (PS) measure associated with $Q$. \\
$R_S(x)$ & Squared empirical PS measure $r(\nabla F_S(x))^2$. \\
\bottomrule
\end{tabular}
\end{table}

In the standard componentwise order, $\bar x$ Pareto dominates $x$ if $F_S(\bar x)\leq F_S(x)$ and at least one component inequality is strict,
where $\leq$ denotes componentwise inequality. A Pareto-optimal point has no Pareto dominator~\citep{pareto1906manuale,ehrgott2005}. The corresponding first-order
stationarity notion, characterized by the absence of a common descent direction, is Pareto stationarity.

\begin{definition}[Empirical Pareto-stationarity measure]
A point $x\in\R^p$ is Pareto stationary for $F_S$ if $\nabla F_S(x)\lambda=0$ for some $\lambda\in\Delta^M$. Its squared Pareto-stationarity measure is
\begin{equation}
 R_S(x):=\min_{\lambda\in\Delta^M}\big\|\nabla F_S(x)\lambda\big\|^2.
 \label{eq:ps-measure}
\end{equation}
\end{definition}

Under differentiability, Pareto stationarity is a necessary first-order condition for local weak Pareto optimality~\citep{ehrgott2005}.
By definition, $x$ is Pareto stationary for $F_S$
if and only if the PS measure $ R_S(x) = 0$.

\begin{wrapfigure}[18]{r}{0.41\textwidth}
\vspace{-0.8cm}
\begin{minipage}{0.4\textwidth}
\begin{algorithm}[H]
\small
\caption{SMG: stochastic multi-gradient descent}
\label{alg:smg}
\begin{algorithmic}[1]
\State \textbf{Input:} training dataset $S$, initial model $x_0$, horizon $T$, stepsizes $\{\alpha_t\}_{t=0}^{T-1}$, and batch sizes $\{|Z_t|\}_{t=0}^{T-1}$.
\For{$t=0,\ldots,T-1$}
  \State Sample $Z_t$ from $S$ with replacement.
  \State Compute $q_{t,m}:=\nabla f_{Z_t,m}(x_t)$, $m\in[M]$, and form $Q_t$.
  \State Compute the MGDA weight $\lambda_t$ and direction $d_t$ by \eqref{eq:smg-weight} and \eqref{eq:smg-direction}, respectively.
  \State Update $x_{t+1}$ by \eqref{eq:smg-update}.
\EndFor
\State Sample $\tau\in\{0,\ldots,T-1\}$ independently with $\Pr(\tau=t)=\alpha_t/\sum_{s=0}^{T-1}\alpha_s$.
\State \textbf{Output:} $x_\tau$.
\end{algorithmic}
\end{algorithm}
\end{minipage}
\vspace{-0.8\baselineskip}
\end{wrapfigure}

\subsection{Stochastic multi-gradient descent}

Since SMG is a stochastic variant of MGDA,
we begin by introducing the basics of MGDA.
MGDA is a deterministic MOO algorithm with access to full-batch gradients. At iteration $t$, it evaluates
$\nabla f_{S,m}(x_t)$ for every objective $m\in[M]$, computes a direction that
minimizes the worst regularized directional derivative, and updates
$x_{t+1}=x_t+\alpha_t d_{\nabla F_S(x_t)}$. For a generic point
$x\in\R^p$, the MGDA direction is
\begin{equation}
 d_{\nabla F_S(x)}
 :=\argmin_{d\in\R^p}\max_{m\in[M]}
 \left\{\left\langle \nabla f_{S,m}(x),d\right\rangle
 +\frac{1}{2}\|d\|^2\right\}.
 \label{eq:empirical-mgda-direction}
\end{equation}

Rather than solving \eqref{eq:empirical-mgda-direction} directly, MGDA can
compute the same direction through a minimum-norm convex combination of the
objective gradients. Specifically, for a gradient matrix
$Q=[q_1,\ldots,q_M]\in\R^{p\times M}$, an associated MGDA weight and direction
satisfy~\citep{mukai1980,fliege2000steepest}
\begin{equation}
 \lambda^\star_Q\in\argmin_{\lambda\in\Delta^M}\|Q\lambda\|^2,
 \qquad
 d_Q:=-Q\lambda^\star_Q.
 \label{eq:matrix-mgda}
\end{equation}
For deterministic MGDA, $Q=\nabla F_S(x_t)$.
Vanilla SMG replaces the deterministic full-batch gradient matrix in
\eqref{eq:matrix-mgda} with unbiased stochastic gradients~\citep{liu2024smg}. At iteration $t$,
it samples a mini-batch $Z_t$ and computes
$q_{t,m}:=\nabla f_{Z_t,m}(x_t)$ for each $m\in[M]$, forming
$Q_t:=\nabla F_{Z_t}(x_t)=\big[q_{t,1},\ldots,q_{t,M}\big]$.
Throughout, the evaluation of $q_{t,m}$ at the current iterate $x_t$ is left implicit.
Under uniform sampling with replacement, $q_{t,m}$ is an unbiased
estimator of $\nabla f_{S,m}(x_t)$, as formalized in
Assumption~\ref{ass:oracle} below. Replacing $Q$ by $Q_t$ in
\eqref{eq:matrix-mgda} gives the following SMG update:
\begin{subequations}
\begin{align}
\lambda_t&\in\argmin_{\lambda\in\Delta^M}\|Q_t\lambda\|^2,
\label{eq:smg-weight}\\
d_t&:=-Q_t\lambda_t,
\label{eq:smg-direction}\\
x_{t+1}&:=x_t+\alpha_t d_t.
\label{eq:smg-update}
\end{align}
\end{subequations}
Algorithm~\ref{alg:smg} summarizes SMG, and \eqref{eq:smg-weight} is solved
exactly at each iteration. Although $\lambda_t$ may be nonunique,
$d_t=d_{Q_t}$ is uniquely defined and continuous in
$Q_t$~\citep{desideri2012mgda,svaiter2018holder,chen2024threeway}. For
deterministic $x_0$, let $\cF_t$ be the {$\sigma$-algebra} generated by
$x_0,Z_0,\ldots,Z_{t-1}$, so the iterates generated by
\eqref{eq:smg-weight}--\eqref{eq:smg-update} are adapted to
$\{\cF_t\}_{t\geq 0}$. After $T$ iterations, Algorithm~\ref{alg:smg} returns
$x_\tau$ from $x_0,\ldots,x_{T-1}$ with probabilities proportional to the
stepsizes. We impose the following standard
assumptions~\citep{chen2024threeway,liu2024smg}.

\begin{assumption}[Smoothness and lower bounds]
For every $z\in S$ and $m\in[M]$, $f_{z,m}$ is $L$-smooth. Each empirical objective is bounded below: $f_{S,m}^{\inf}:=\inf_{x\in\R^p}f_{S,m}(x)>-\infty$. Consequently, every empirical and mini-batch objective is also $L$-smooth.
\label{ass:smooth-lower}
\end{assumption}

\begin{assumption}[Stochastic gradients and mini-batch sampling]
\label{ass:oracle}
For every $t\geq 0$, the mini-batch size satisfies $|Z_t|\geq 1$.
Conditional on $\cF_t$, $Z_t$ is sampled uniformly with replacement from $S$
independently of the past. There exists a constant 
$\sigma\geq 0$ such that, almost
surely, for every $m\in[M]$,
\begin{align}
 &\E\big[q_{t,m}\mid\cF_t\big]=\nabla f_{S,m}(x_t),
 \label{eq:unbiasedness}\\
 &\E\big[\|q_{t,m}-\nabla f_{S,m}(x_t)\|^2\mid\cF_t\big]
 \leq \frac{\sigma^2}{|Z_t|}.
 \label{eq:variance}
\end{align}
\end{assumption}

Under uniform sampling with replacement, \eqref{eq:unbiasedness} holds
automatically. Moreover, \eqref{eq:variance} follows from the standard uniform
per-sample variance bound
$\sup_{x\in\R^p}\max_{m\in[M]}\E_{z\sim\operatorname{Unif}(S)}
[\|\nabla f_{z,m}(x)-\nabla f_{S,m}(x)\|^2]\leq\sigma^2$: independence of
the $|Z_t|$ sampled gradients yields the factor $1/|Z_t|$. Thus, the
mini-batch variance vanishes as $|Z_t|$ grows.

\subsection{Auxiliary lemmas}
This section presents the auxiliary lemmas used in the convergence proof.
The first property is the common-descent inequality for the mini-batch MGDA direction.

\begin{lemma}[Common descent direction inequality]
\label{lem:stochastic-common-descent}
For every $m\in[M]$, every iteration $t$, and every realization of the mini-batch,
\begin{equation}
 q_{t,m}^\top d_t\leq-\|d_t\|^2.
 \label{eq:common-descent}
\end{equation}
\end{lemma}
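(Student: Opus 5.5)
The plan is to use the first-order optimality condition of the convex quadratic program \eqref{eq:smg-weight} over the simplex. Write $\phi(\lambda):=\|Q_t\lambda\|^2$, which is convex and differentiable with $\nabla\phi(\lambda)=2Q_t^\top Q_t\lambda$. Since $\lambda_t$ minimizes $\phi$ over the convex set $\Delta^M$, the variational inequality
\begin{equation*}
 \big\langle 2Q_t^\top Q_t\lambda_t,\ \mu-\lambda_t\big\rangle\geq 0
 \qquad\text{for all }\mu\in\Delta^M
\end{equation*}
holds. Substituting $d_t=-Q_t\lambda_t$ rewrites this as $\langle Q_t\mu,d_t\rangle\leq\langle Q_t\lambda_t,d_t\rangle=-\|d_t\|^2$ for every $\mu\in\Delta^M$.

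The conclusion then follows by taking $\mu=e_m$, the $m$-th standard basis vector, which lies in $\Delta^M$. This choice gives $Q_te_m=q_{t,m}$, so $q_{t,m}^\top d_t\leq-\|d_t\|^2$, which is exactly \eqref{eq:common-descent}.

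No probabilistic property is used. The argument holds pointwise for any matrix $Q_t$, hence for every realization of the mini-batch, and for any selected minimizer $\lambda_t$, whether or not it is unique.

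There is no real obstacle. The only point needing a line of justification is the variational inequality itself, and it can be shown directly. For $\mu\in\Delta^M$ and $s\in(0,1]$, the point $\lambda_t+s(\mu-\lambda_t)$ lies in $\Delta^M$ by convexity, so
\begin{equation*}
 0\leq\frac{\phi\big(\lambda_t+s(\mu-\lambda_t)\big)-\phi(\lambda_t)}{s}
 =2\langle Q_t\lambda_t,Q_t(\mu-\lambda_t)\rangle+s\|Q_t(\mu-\lambda_t)\|^2 .
\end{equation*}
Letting $s\downarrow0$ yields the inequality.
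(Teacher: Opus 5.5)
Your proof is correct and is essentially the paper's argument: the paper observes that $-d_t=Q_t\lambda_t$ is the projection of the origin onto $\operatorname{conv}\{q_{t,1},\ldots,q_{t,M}\}=\{Q_t\mu:\mu\in\Delta^M\}$ and invokes projection optimality $\langle Q_t\lambda_t,q_{t,m}-Q_t\lambda_t\rangle\geq 0$, which is exactly your variational inequality at $\mu=e_m$. You write the same condition in weight coordinates and derive it directly from the difference quotient rather than citing projection optimality, which also makes explicit that the bound holds for any selected minimizer $\lambda_t$.
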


\begin{proof}
Let $p_t:=-d_t=Q_t\lambda_t$. Since $p_t$ is the projection of the origin onto $\operatorname{conv}\{q_{t,1},\ldots,q_{t,M}\}$, projection optimality gives
\begin{equation}
 \big\langle p_t,q_{t,m}-p_t\big\rangle\geq 0,
 \qquad m\in[M].
 \label{eq:projection-optimality}
\end{equation}
Rearranging \eqref{eq:projection-optimality} and using $d_t=-p_t$ yields \eqref{eq:common-descent}.
\end{proof}

The next result concerns the PS measure $r(Q)=\|d_Q\|$. The direction map
$Q\mapsto d_Q$ need not be Lipschitz but is generally only $(1/2)$-H{\"o}lder continuous~\citep{svaiter2018holder,chen2024threeway,huangchen2026more}. The PS measure has a stronger continuity guarantee stated below.

\begin{lemma}[Lipschitz continuity of the PS measure]
\label{lem:ps-measure-lipschitz}
For $Q,Q'\in\R^{p\times M}$, define $r(Q):=\min_{\lambda\in\Delta^M}\|Q\lambda\|
 =\|d_Q\|$. Then
\begin{equation}
 \big|r(Q)-r(Q')\big|\leq\|Q-Q'\|_2\leq\|Q-Q'\|_F.
 \label{eq:ps-measure-lipschitz}
\end{equation}
\end{lemma}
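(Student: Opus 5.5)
The plan is to compare the two minimization problems through a common feasible weight, rather than comparing their minimizers. The first equality $r(Q)=\|d_Q\|$ is immediate from \eqref{eq:matrix-mgda}: $d_Q=-Q\lambda^\star_Q$ with $\lambda^\star_Q$ a minimizer of $\|Q\lambda\|$ over $\Delta^M$. Squaring does not change the minimizer set because $\|\cdot\|\ge 0$. Hence $\|d_Q\|=\min_{\lambda\in\Delta^M}\|Q\lambda\|$. Moreover, this value does not depend on which minimizer is selected.

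For the Lipschitz bound, I would fix any $\lambda'\in\argmin_{\lambda\in\Delta^M}\|Q'\lambda\|$; it exists since $\Delta^M$ is compact and the objective is continuous. Since $\lambda'$ is feasible for the problem defining $r(Q)$, the triangle inequality gives
\begin{equation*}
 r(Q)\le \|Q\lambda'\|\le \|Q'\lambda'\|+\|(Q-Q')\lambda'\|\le r(Q')+\|Q-Q'\|_2\,\|\lambda'\|.
\end{equation*}
The key observation is that for $\lambda'\in\Delta^M$ we have $\|\lambda'\|\le\|\lambda'\|_1=1$. Therefore $r(Q)-r(Q')\le\|Q-Q'\|_2$. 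Exchanging the roles of $Q$ and $Q'$ gives the reverse inequality, and together they yield $|r(Q)-r(Q')|\le\|Q-Q'\|_2$.

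The last inequality $\|Q-Q'\|_2\le\|Q-Q'\|_F$ is the standard comparison of spectral and Frobenius norms: the squared spectral norm is the largest squared singular value, which is at most the sum of all squared singular values. There is no real obstacle in this argument. The only point needing care is to bound $\|\lambda'\|$ by $1$ using the simplex constraint, and to avoid any argument that compares $\lambda^\star_Q$ with $\lambda^\star_{Q'}$ directly. Such a comparison would only give the weaker Hölder-type behavior of the direction map, whereas the value function $r$ is a minimum of functions that are each $1$-Lipschitz in $Q$.
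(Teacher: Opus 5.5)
Your proposal is correct and essentially identical to the paper's proof. Both fix a minimizer $\lambda'$ for $Q'$, use it as a feasible comparator for $r(Q)$, apply the triangle inequality with $\|\lambda'\|\leq\|\lambda'\|_1=1$, then symmetrize and finish with $\|Q-Q'\|_2\leq\|Q-Q'\|_F$.
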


\begin{proof}
For any $\lambda\in\Delta^M$, the triangle inequality and
$\|\lambda\|_2\leq\|\lambda\|_1=1$ imply
\begin{equation}
 \|Q\lambda\|\leq\|Q'\lambda\|+\|(Q-Q')\lambda\|
 \leq\|Q'\lambda\|+\|Q-Q'\|_2.
 \label{eq:ps-measure-one-sided}
\end{equation}
The simplex $\Delta^M$ is compact and $\lambda\mapsto\|Q'\lambda\|$ is continuous, so there exists
$\lambda'\in\argmin_{\lambda\in\Delta^M}\|Q'\lambda\|$. Evaluating the minimum that defines $r(Q)$ at this \emph{fixed} comparator and then applying \eqref{eq:ps-measure-one-sided} with $\lambda=\lambda'$ gives
\begin{align}
r(Q) &\leq \|Q\lambda'\|
\leq \|Q'\lambda'\|+\|Q-Q'\|_2
= r(Q')+\|Q-Q'\|_2.
\label{eq:ps-measure-one-sided-min}
\end{align}
Repeating the same argument after interchanging $Q$ and $Q'$ yields
$r(Q')\leq r(Q)+\|Q-Q'\|_2$. Thus,
$|r(Q)-r(Q')|\leq\|Q-Q'\|_2$. The second inequality in
\eqref{eq:ps-measure-lipschitz} follows from $\|Q-Q'\|_2\leq\|Q-Q'\|_F$.
\end{proof}

Note that the Frobenius-norm bound is included in \eqref{eq:ps-measure-lipschitz}
because Assumption~\ref{ass:oracle} controls the conditional second moment of
the mini-batch gradient-matrix error in the Frobenius norm. This bound is used
directly in the convergence proof below. Lemma~\ref{lem:ps-measure-lipschitz}
differs from Theorem~3.1 of \citet{svaiter2018holder}: it establishes
Lipschitz continuity of the PS measure with respect to the gradient matrix
$Q$, whereas that result analyzes continuity of the MGDA direction with
respect to the model parameter $x$. 
However, the intermediate steps in the proof of Theorem~3.1 of \citet{svaiter2018holder}
already imply Lemma~\ref{lem:ps-measure-lipschitz}.
Although equivalent statements and proofs
of Lemma~\ref{lem:stochastic-common-descent} appear in prior
work~\citep{fliege2000steepest,svaiter2018holder,fernando2023moco,chen2024threeway}, we include it for completeness.

\section{Convergence to empirical Pareto stationarity}
\label{sec:convergence}

Under Assumptions~\ref{ass:smooth-lower}--\ref{ass:oracle}, the main result
controls the squared empirical PS measure for general mini-batch schedules.
Define the following quantities:
\begin{equation}
 A_T:=\sum_{t=0}^{T-1}\alpha_t,
 \qquad
 V_T:=\sum_{t=0}^{T-1}\frac{\alpha_t}{|Z_t|}.
 \label{eq:schedule-aggregates}
\end{equation}
Let $\mathbf Z_T:=(Z_0,\ldots,Z_{T-1})$ denote the mini-batch trajectory
sampled in Algorithm~\ref{alg:smg}. We write $\E_{\mathbf Z_T}$ for expectation over the mini-batch sampling, and $\E_{\mathbf Z_T,\tau}$ for joint expectation over the mini-batch sampling and the independent output-index draw.

\begin{theorem}[Convergence under general stepsize and batch-size schedules]
\label{thm:general-smg}
Let the stepsizes and batch sizes be deterministic inputs to Algorithm~\ref{alg:smg}, with $0<\alpha_t\leq 1/(2L)$. Let $\tau$ be sampled independently of $\mathbf Z_T$ with $\Pr(\tau=t)=\alpha_t/A_T$. Under Assumptions~\ref{ass:smooth-lower}--\ref{ass:oracle}, for every fixed $m\in[M]$ and $T\geq 1$,
\begin{equation}
 \min_{0\leq t\leq T-1}\E_{\mathbf Z_T}\big[R_S(x_t)\big]
 \leq
 \frac{1}{A_T}\sum_{t=0}^{T-1}\alpha_t\E_{\mathbf Z_T}\big[R_S(x_t)\big]
 =\E_{\mathbf Z_T,\tau}\big[R_S(x_\tau)\big]
 \leq
 \frac{8\big(f_{S,m}(x_0)-f_{S,m}^{\inf}\big)+(4+2M)\sigma^2V_T}{A_T}.
 \label{eq:general-rate}
\end{equation}
Because \eqref{eq:general-rate} holds for every $m$, its right-hand side may be minimized over $m\in[M]$. In particular, the stepsize-weighted average $\E_{\mathbf Z_T,\tau}[R_S(x_\tau)]$ converges to zero whenever $A_T\to\infty$ and $V_T/A_T\to 0$.
\end{theorem}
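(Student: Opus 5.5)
The plan is to run a one-step descent argument on the single fixed objective $f_{S,m}$. This yields a weighted sum bound on the \emph{stochastic} direction norms $\|d_t\|^2$. We then transfer it to the empirical PS measure $R_S(x_t)$ using Lemma~\ref{lem:ps-measure-lipschitz}. The equality $\frac{1}{A_T}\sum_t\alpha_t\E[R_S(x_t)]=\E_{\mathbf Z_T,\tau}[R_S(x_\tau)]$ follows directly from the independence of $\tau$ and $\Pr(\tau=t)=\alpha_t/A_T$. The first inequality holds because a minimum is at most a weighted average. So the work lies in the last inequality.

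\textbf{Step 1 (descent on $f_{S,m}$).} Fix $m$ and write $e_{t,m}:=\nabla f_{S,m}(x_t)-q_{t,m}$. By $L$-smoothness of $f_{S,m}$ (Assumption~\ref{ass:smooth-lower}) and $x_{t+1}=x_t+\alpha_t d_t$,
\[
f_{S,m}(x_{t+1})\leq f_{S,m}(x_t)+\alpha_t q_{t,m}^\top d_t+\alpha_t\langle e_{t,m},d_t\rangle+\tfrac{L\alpha_t^2}{2}\|d_t\|^2 .
\]
We apply Lemma~\ref{lem:stochastic-common-descent} to get $q_{t,m}^\top d_t\leq-\|d_t\|^2$. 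Young's inequality gives $\langle e_{t,m},d_t\rangle\leq\frac12\|e_{t,m}\|^2+\frac12\|d_t\|^2$, and $\alpha_t\leq 1/(2L)$ gives $\frac{L\alpha_t^2}{2}\leq\frac{\alpha_t}{4}$. Together these yield
\[
f_{S,m}(x_{t+1})\leq f_{S,m}(x_t)-\tfrac{\alpha_t}{4}\|d_t\|^2+\tfrac{\alpha_t}{2}\|e_{t,m}\|^2 .
\]
Next we take conditional expectations given $\cF_t$ and use \eqref{eq:variance}: $\E[\|e_{t,m}\|^2\mid\cF_t]\leq\sigma^2/|Z_t|$. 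Then we take total expectations, telescope over $t=0,\ldots,T-1$, and lower-bound $f_{S,m}(x_T)\geq f_{S,m}^{\inf}$. This gives
\[
\sum_{t=0}^{T-1}\alpha_t\E\|d_t\|^2\leq 4\big(f_{S,m}(x_0)-f_{S,m}^{\inf}\big)+2\sigma^2V_T .
\]
Note that $\|d_t\|$ and $e_{t,m}$ are correlated within an iteration. For this reason we do not use unbiasedness on the cross term and only use the pointwise Young bound.

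\textbf{Step 2 (transfer to $R_S$).} This is the key step where the Lipschitz property replaces H\"older continuity. Lemma~\ref{lem:ps-measure-lipschitz} with $Q=\nabla F_S(x_t)$ and $Q'=Q_t$ gives $r(\nabla F_S(x_t))\leq\|d_t\|+\|Q_t-\nabla F_S(x_t)\|_F$. Hence
\[
R_S(x_t)\leq 2\|d_t\|^2+2\|Q_t-\nabla F_S(x_t)\|_F^2 .
\]
Summing \eqref{eq:variance} over the $M$ columns gives $\E[\|Q_t-\nabla F_S(x_t)\|_F^2\mid\cF_t]\leq M\sigma^2/|Z_t|$. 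Multiplying by $\alpha_t$, taking expectations, summing, and inserting Step~1 gives
\[
\sum_t\alpha_t\E[R_S(x_t)]\leq 8\big(f_{S,m}(x_0)-f_{S,m}^{\inf}\big)+4\sigma^2V_T+2M\sigma^2V_T .
\]
Dividing by $A_T$ yields \eqref{eq:general-rate}.

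Since $m$ was arbitrary, the bound holds for every $m$ and may be minimized over $m$. The convergence claim is then immediate when $A_T\to\infty$ and $V_T/A_T\to0$. The main obstacle I expect is the bookkeeping of constants in Step~1, namely choosing the Young split so that the net coefficient of $\|d_t\|^2$ remains $\alpha_t/4$ after absorbing the smoothness term. Measurability also needs care: $x_t$ must be $\cF_t$-measurable, so the conditional variance bounds apply. Conceptually, though, the argument goes through without bounded gradients because the Lipschitz transfer in Step~2 costs only an additive variance term.
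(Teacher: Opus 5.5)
Your proposal is correct and follows the paper's proof. It uses the same split of $\nabla f_{S,m}(x_t)^\top d_t$ into $q_{t,m}^\top d_t$ plus an error cross term, bounded pointwise by common descent and Young's inequality to give $-\frac{\alpha_t}{4}\|d_t\|^2+\frac{\alpha_t}{2}\|e_{t,m}\|^2$, then the same Lipschitz transfer $R_S(x_t)\leq 2\|d_t\|^2+2\|Q_t-\nabla F_S(x_t)\|_F^2$, arriving at the same constants $8$ and $4+2M$. The only differences are cosmetic: you telescope before transferring to $R_S$, whereas the paper inserts the transfer into the one-step inequality first; and the paper briefly justifies that $\E[f_{S,m}(x_t)]$ is finite, a point you should also state.
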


\begin{proof}
Throughout the proof, we abbreviate $\E:=\E_{\mathbf Z_T}$. Fix $m\in[M]$ and write $\xi_{m,t}:=q_{t,m}-\nabla f_{S,m}(x_t)$. By Lemma~\ref{lem:stochastic-common-descent}, the following chain holds:
\begin{equation}
 \nabla f_{S,m}(x_t)^\top d_t
 =q_{t,m}^\top d_t-\xi_{m,t}^\top d_t
 \leq -\|d_t\|^2-\xi_{m,t}^\top d_t
 \leq - \frac{1}{2} \|d_t\|^2 + \frac{1}{2} \|\xi_{m,t}\|^2.
 \label{eq:descent-direction-decomposition}
\end{equation}
Here, the second inequality in \eqref{eq:descent-direction-decomposition} follows from Young's inequality. Combining $L$-smoothness, \eqref{eq:smg-update}, \eqref{eq:descent-direction-decomposition}, and the stepsize bound gives
\begin{align}
 f_{S,m}(x_{t+1})
 &\leq f_{S,m}(x_t)+\alpha_t\nabla f_{S,m}(x_t)^\top d_t
       +\frac{L\alpha_t^2}{2}\|d_t\|^2
 &&\text{by $L$-smoothness and \eqref{eq:smg-update}}
 \label{eq:descent-lemma}\\
 &\leq f_{S,m}(x_t)-\frac{\alpha_t}{2}\left(1-L\alpha_t\right)\|d_t\|^2
       +\frac{\alpha_t}{2}\|\xi_{m,t}\|^2
 &&\text{by~\eqref{eq:descent-direction-decomposition}}
 \label{eq:smoothness-descent}\\
 &\leq f_{S,m}(x_t)-\frac{\alpha_t}{4}\|d_t\|^2
       +\frac{\alpha_t}{2}\|\xi_{m,t}\|^2
 &&\text{since $L\alpha_t\leq 1/2$}
 \label{eq:one-step-descent}\\
  &\leq f_{S,m}(x_t)-\frac{\alpha_t}{8}R_S(x_t)
       +\frac{\alpha_t}{2}\|\xi_{m,t}\|^2
       +\frac{\alpha_t}{4}\|Q_t-\nabla F_S(x_t)\|_F^2
  &&\text{by Lemma~\ref{lem:ps-measure-lipschitz}}.
  \label{eq:ps-measure-continuity}
\end{align}
To obtain the last inequality in \eqref{eq:ps-measure-continuity}, write $E_t:=Q_t-\nabla F_S(x_t)$. Lemma~\ref{lem:ps-measure-lipschitz} and $(a+b)^2\leq 2a^2+2b^2$ then yield
\begin{align}
 \sqrt{R_S(x_t)}
 &=r\bigl(\nabla F_S(x_t)\bigr)
 \leq r(Q_t)+\|E_t\|_F
 =\|d_t\|+\|E_t\|_F, \nonumber\\
 R_S(x_t)
 &\leq 2\|d_t\|^2+2\|E_t\|_F^2
 \quad\Longrightarrow\quad
 -\frac{\alpha_t}{4}\|d_t\|^2
 \leq -\frac{\alpha_t}{8}R_S(x_t)
 +\frac{\alpha_t}{4}\|E_t\|_F^2.
\label{eq:ps-measure-continuity-unpacked}
\end{align}
Taking conditional expectations in \eqref{eq:ps-measure-continuity} yields the one-step inequality
\begin{align}
 \alpha_t R_S(x_t)
 &\leq 8\Big(f_{S,m}(x_t)-\E\big[f_{S,m}(x_{t+1})\mid\cF_t\big]\Big)
 +4\alpha_t\E\big[\|\xi_{m,t}\|^2\mid\cF_t\big]
 +2\alpha_t\E\big[\|Q_t-\nabla F_S(x_t)\|_F^2\mid\cF_t\big] \notag\\
 &\leq 8\Big(f_{S,m}(x_t)-\E\big[f_{S,m}(x_{t+1})\mid\cF_t\big]\Big)
 +\frac{(4+2M)\alpha_t\sigma^2}{|Z_t|}.
 \label{eq:conditional-ps-measure-descent}
\end{align}
Here, the second inequality follows from the conditional variance
bound in Assumption~\ref{ass:oracle}, which gives
$\E[\|Q_t-\nabla F_S(x_t)\|_F^2\mid\cF_t]\leq M\sigma^2/|Z_t|$.
Each $f_{S,m}$ is bounded below by Assumption~\ref{ass:smooth-lower}. Moreover, induction using \eqref{eq:one-step-descent} and the conditional variance bound in Assumption~\ref{ass:oracle} shows that $\E[f_{S,m}(x_t)]$ is finite for every $t\leq T$. All expectations below are therefore well-defined and finite. Taking total expectations in \eqref{eq:conditional-ps-measure-descent} and summing over $t$ gives
\begin{align}
 \sum_{t=0}^{T-1}\alpha_t\E\big[R_S(x_t)\big]
 &\leq 8\Big(f_{S,m}(x_0)-\E\big[f_{S,m}(x_T)\big]\Big)
 +(4+2M)\sigma^2V_T
 \label{eq:ps-measure-sum-before-lower-bound}\\
 &\leq 8\big(f_{S,m}(x_0)-f_{S,m}^{\inf}\big)+(4+2M)\sigma^2V_T.
 \label{eq:ps-measure-sum}
\end{align}
Conditioning on $\mathbf Z_T$ and using the independence of $\tau$ give
$\E_{\tau}[R_S(x_\tau)\mid\mathbf Z_T]=A_T^{-1}\sum_{t=0}^{T-1}\alpha_tR_S(x_t)$. Taking expectation over $\mathbf Z_T$ establishes the equality below. Dividing \eqref{eq:ps-measure-sum} by $A_T$ and using the minimum--average inequality then give
\begin{equation}
 \min_{0\leq t\leq T-1}\E_{\mathbf Z_T}\big[R_S(x_t)\big]
 \leq
 \frac{1}{A_T}\sum_{t=0}^{T-1}\alpha_t\E_{\mathbf Z_T}\big[R_S(x_t)\big]
 =\E_{\mathbf Z_T,\tau}\big[R_S(x_\tau)\big]
 \leq
 \frac{8\big(f_{S,m}(x_0)-f_{S,m}^{\inf}\big)+(4+2M)\sigma^2V_T}{A_T}.
 \label{eq:rate-conclusion}
\end{equation}
This proves \eqref{eq:general-rate}. The final convergence statement follows directly when $A_T\to\infty$ and $V_T/A_T\to 0$.
\end{proof}

Because $R_S$ involves the full-batch gradient matrix $\nabla F_S$, Algorithm~\ref{alg:smg} may not be able to identify the minimizing iterate in \eqref{eq:general-rate}, and the terminal iterate $x_T$ itself carries no guarantee. Algorithm~\ref{alg:smg} therefore uses a standard randomized output: it returns $x_\tau$ with $\tau\in\{0,\ldots,T-1\}$ drawn independently of the trajectory with probabilities proportional to $\alpha_t$. Then $\E_{\mathbf Z_T,\tau}[R_S(x_\tau)]$ equals the stepsize-weighted average $A_T^{-1}\sum_{t=0}^{T-1}\alpha_t\,\E_{\mathbf Z_T}[R_S(x_t)]$ and is bounded by the right-hand side of \eqref{eq:general-rate}. Under the constant stepsizes of Corollary~\ref{cor:increasing-batches}, $\tau$ is uniform.

\begin{corollary}[Convergence under diverging mini-batch sizes]
\label{cor:increasing-batches}
Under the assumptions of Theorem~\ref{thm:general-smg}, suppose in addition that $\alpha_t=\alpha\in(0,1/(2L)]$ for all $t$ and $|Z_t|\to\infty$. Then, for every $m\in[M]$,
\begin{align}
 \E_{\mathbf Z_T}\left[\min_{0\leq t\leq T-1}R_S(x_t)\right]
 &\leq \min_{0\leq t\leq T-1}\E_{\mathbf Z_T}\big[R_S(x_t)\big]  
 \leq \frac{1}{T}\sum_{t=0}^{T-1}\E_{\mathbf Z_T}\big[R_S(x_t)\big]
  =\E_{\mathbf Z_T,\tau}\big[R_S(x_\tau)\big] \notag\\
 &\leq \frac{8\big(f_{S,m}(x_0)-f_{S,m}^{\inf}\big)}{\alpha T}
 +(4+2M)\sigma^2\frac{1}{T}\sum_{t=0}^{T-1}\frac{1}{|Z_t|}
 \longrightarrow 0.
 \label{eq:increasing-batch-rate}
\end{align}
If $|Z_t|\geq b(t+1)$ for some $b>0$, then \eqref{eq:increasing-batch-rate} specializes to
\begin{align}
 \E_{\mathbf Z_T}\left[\min_{0\leq t\leq T-1}R_S(x_t)\right]
 &\leq \min_{0\leq t\leq T-1}\E_{\mathbf Z_T}\big[R_S(x_t)\big]  
 \leq \frac{1}{T}\sum_{t=0}^{T-1}\E_{\mathbf Z_T}\big[R_S(x_t)\big]
  =\E_{\mathbf Z_T,\tau}\big[R_S(x_\tau)\big] \notag\\
 &\leq \frac{8\big(f_{S,m}(x_0)-f_{S,m}^{\inf}\big)}{\alpha T}
 +\frac{(4+2M)\sigma^2\big(1+\log T\big)}{bT}
 =\widetilde O\left(\frac{M}{T}\right).
 \label{eq:linear-batch-rate-with-M}
\end{align}
\end{corollary}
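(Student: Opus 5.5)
The plan is to specialize Theorem~\ref{thm:general-smg} to constant stepsizes and then add one extra inequality on the left. With $\alpha_t=\alpha\in(0,1/(2L)]$ the theorem's hypotheses hold, and $A_T=\alpha T$ while $V_T=\alpha\sum_{t=0}^{T-1}|Z_t|^{-1}$. The law of $\tau$ becomes uniform on $\{0,\ldots,T-1\}$, so the stepsize-weighted average in \eqref{eq:general-rate} is the plain average $T^{-1}\sum_t\E_{\mathbf Z_T}[R_S(x_t)]$. Dividing, the factor $\alpha$ cancels in the variance term and the right-hand side of \eqref{eq:general-rate} becomes exactly the bound in \eqref{eq:increasing-batch-rate}. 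The middle inequality and the equality in \eqref{eq:increasing-batch-rate} are inherited directly from the theorem.

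The new leftmost inequality $\E[\min_t R_S(x_t)]\le\min_t\E[R_S(x_t)]$ follows pointwise. For each fixed $s$ we have $\min_t R_S(x_t)\le R_S(x_s)$, so taking expectations gives $\E[\min_t R_S(x_t)]\le\E[R_S(x_s)]$. Minimizing over $s$ yields the claim. Measurability and integrability are not an issue, because $R_S\ge 0$ and the minimum is over finitely many terms.

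Convergence to zero is a separate step. The first term, $8(f_{S,m}(x_0)-f_{S,m}^{\inf})/(\alpha T)$, tends to zero trivially. For the second term, $|Z_t|\to\infty$ gives $1/|Z_t|\to 0$, and the Cesàro mean of a sequence tending to zero also tends to zero. Concretely, given $\epsilon>0$, choose $t_0$ with $1/|Z_t|<\epsilon$ for all $t\ge t_0$. Then
\begin{equation*}
\frac{1}{T}\sum_{t=0}^{T-1}\frac{1}{|Z_t|}\le\frac{t_0}{T}+\epsilon,
\end{equation*}
using $1/|Z_t|\le1$ for $t<t_0$. The right-hand side is below $2\epsilon$ for all large $T$.

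For the linear-growth case, $|Z_t|\ge b(t+1)$ gives
\begin{equation*}
\sum_{t=0}^{T-1}\frac{1}{|Z_t|}\le\frac1b\sum_{k=1}^{T}\frac1k\le\frac{1+\log T}{b}.
\end{equation*}
The harmonic bound $\sum_{k=1}^T 1/k\le 1+\int_1^T dx/x$ is the only estimate needed. Substituting into \eqref{eq:increasing-batch-rate} gives \eqref{eq:linear-batch-rate-with-M}. The $\widetilde O(M/T)$ summary treats $L$, $\alpha$, $b$, $\sigma$ and the initial gap as constants.

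I expect no real obstacle here. The only care needed is to confirm that the Cesàro argument uses the bound $|Z_t|\ge1$ from Assumption~\ref{ass:oracle}, which keeps the early terms controlled.
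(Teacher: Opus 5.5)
Your proposal is correct and follows essentially the same route as the paper: specialize Theorem~\ref{thm:general-smg} with $A_T=\alpha T$ and $V_T=\alpha\sum_{t=0}^{T-1}|Z_t|^{-1}$, obtain the leftmost inequality from $\min_{t}R_S(x_t)\le R_S(x_s)$ almost surely, invoke the Ces\`aro mean for the limit, and bound the harmonic sum under linear batch growth. Your explicit $\epsilon$--$t_0$ argument, which uses $|Z_t|\ge 1$, only spells out the Ces\`aro step that the paper states in one line.
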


\begin{proof}
For constant stepsizes, $A_T=\alpha T$ and $V_T=\alpha\sum_{t=0}^{T-1}|Z_t|^{-1}$. The first inequality in \eqref{eq:increasing-batch-rate} follows because $\min_{0\leq t\leq T-1}R_S(x_t)\leq R_S(x_t)$ almost surely for every $t$, whereas the remaining relations follow from the minimum--average inequality, the uniform distribution of $\tau$, and Theorem~\ref{thm:general-smg}. Because $|Z_t|^{-1}\to 0$, its Ces\`aro mean also converges to zero. Under linear batch growth,
\begin{equation}
 \frac{1}{T}\sum_{t=0}^{T-1}\frac{1}{|Z_t|}
 \leq\frac{1}{bT}\sum_{t=0}^{T-1}\frac{1}{t+1}
 \leq\frac{1+\log T}{bT}.
 \label{eq:linear-batch-specialization}
\end{equation}
Substitution into \eqref{eq:increasing-batch-rate} gives \eqref{eq:linear-batch-rate-with-M} and completes the proof.
\end{proof}

\begin{remark}[Why the rate improves over the prior SMG analysis]
\label{rem:rate-improvement}
For linearly growing mini-batches and $\alpha=\Theta(T^{-1/2})$, \citet[Eq.~(C.23)]{chen2024threeway} obtain an $\widetilde O(T^{-1/4})$ bound for the squared empirical PS measure. Applying Jensen's inequality yields the $\widetilde O(T^{-1/8})$ bound for the unsquared measure reported in their Theorem~8. Their proof uses the $(1/2)$-H{\"o}lder continuity of $Q\mapsto d_Q$ and bounded gradients (their Assumption~3) when relating the stochastic MGDA direction to the empirical PS measure. Our analysis does not require bounded gradients.

In contrast, our proof does not use this $(1/2)$-H{\"o}lder continuity of $Q\mapsto d_Q$.
Equation~\eqref{eq:ps-measure-continuity} instead uses the Lipschitz continuity of the PS measure $Q\mapsto r(Q)=\|d_Q\|$ from Lemma~\ref{lem:ps-measure-lipschitz}. The mini-batch error therefore enters through its second moment, rather than through a fourth root. With constant admissible stepsizes and linearly growing batches $|Z_t|\geq b(t+1)$, Theorem~\ref{thm:general-smg} gives $\min_{0\leq t<T}\E[R_S(x_t)]=\widetilde O(T^{-1})$.
Below we provide a side-by-side comparison of the proof structures.
\end{remark}

\medskip
\noindent\textbf{Side-by-side comparison of the proof structures.}
Recall that $Q_t=\nabla F_{Z_t}(x_t)$, $d_t=d_{Q_t}$, and $E_t=Q_t-\nabla F_S(x_t)$, with $m$-th column $\xi_{m,t}=q_{t,m}-\nabla f_{S,m}(x_t)$, where $q_{t,m}=\nabla f_{Z_t,m}(x_t)$. Thus, $\|E_t\|_F^2=\sum_{m=1}^M\|\xi_{m,t}\|^2$ and $\|\xi_{m,t}\|\leq\|E_t\|_F$. Also recall that $r(Q)=\|d_Q\|$ and $R_S(x_t)=r(\nabla F_S(x_t))^2$.

This comparison is schematic, with constants and lower-order terms suppressed. The right panel includes the pointwise moment refinement in part~1 of Remark~\ref{rem:chen-proof-improvement}, which removes the logarithmic factor from the original bound. Throughout the comparison, $a\lesssim b$ means $a\leq Cb$ for a positive constant $C$ independent of $t$, $T$, $\alpha_t$, and $|Z_t|$, but possibly dependent on fixed problem parameters. We use $\lesssim$ below to emphasize the proof mechanisms. Both proofs apply $L$-smoothness to the same update, 
and use the same Lyapunov energy,
giving
\begin{align}\label{eq:m_obj_smooth}
f_{S,m}(x_{t+1})
&\leq f_{S,m}(x_t)+
\underbrace{\alpha_t \nabla f_{S,m}(x_t)^\top d_t
+\frac{L\alpha_t^2}{2}\|d_t\|^2}_{\text{term $(*)$, key difference}}.
\end{align}
They differ in how they control the term $(*)$ before telescoping.
To bound the term $(*)$, 
both proofs first bound the term $\nabla f_{S,m}(x_t)^\top d_t$
and then combine this estimate with the smoothness term $\frac{L\alpha_t^2}{2}\|d_t\|^2$.
\begin{center}
\begin{minipage}[t]{0.485\textwidth}
\vspace{0pt}
\begin{tcolorbox}[
  enhanced,
  colback=green!3!white,
  colframe=boxbordergreen,
  colbacktitle=boxbordergreen,
  coltext=black,
  coltitle=green!3!white,
  fonttitle=\bfseries,
  title={Present proof (Theorem~{\colorlet{boxbordergreen}{green!3!white}\ref{thm:general-smg}})},
  boxrule=0.6pt,
  arc=1mm,
  left=1.5mm,
  right=1.5mm,
  top=1mm,
  bottom=1mm
]
\footnotesize
\textbf{1. Bound term $(*)$ in~\eqref{eq:m_obj_smooth}.}

\smallskip
\noindent{\setlength{\fboxsep}{1pt}%
\colorbox{green!15!white}{%
\parbox{\dimexpr\linewidth-2\fboxsep\relax}{%
\textbf{Key step: decompose $\nabla f_{S,m}(x_t)$ 
into $q_{t,m}-\xi_{m,t}$
and bound the PS measure difference.}
Common descent and the error decomposition yield
\begin{align}
\nabla f_{S,m}(x_t)^\top d_t
&=q_{t,m}^\top d_t-\xi_{m,t}^\top d_t \notag\\
&\leq -\|d_t\|^2-\xi_{m,t}^\top d_t \notag\\
&\lesssim - \|d_t\|^2+ \|\xi_{m,t}\|^2.
\end{align}
The Lipschitz continuity of the PS measure and the stepsize bound then give
\begin{align}
\alpha_t \nabla f_{S,m}(x_t)^\top d_t +\frac{L\alpha_t^2}{2}\|d_t\|^2
&\lesssim - \alpha_t R_S(x_t) + \alpha_t \|E_t\|_F^2.
\end{align}
This avoids comparing the stochastic and full-batch directions, and hence avoids the $(1/2)$-H{\"o}lder continuity term.
}}}
\smallskip

\textbf{2. Telescope.}
Summing gives
\begin{align}
\sum_{t=0}^{T-1} \alpha_t \E[R_S(x_t)]
\lesssim
1 + \sum_{t=0}^{T-1} \alpha_t\E[\|E_t\|_F^{2}].
\end{align}
\smallskip

\textbf{3. Insert the variance and the schedule.}
The matrix-error variance bound gives
\begin{align}
\min_{0\leq t\leq T-1} \E[R_S(x_t)]
=O\Big( \frac{1}{A_T} +
\frac{\sum_{t=0}^{T-1} \alpha_t|Z_t|^{-1}}{A_T} \Big).
\end{align}

\smallskip
\textbf{4. Specialize the rate.}
With linear batch growth $|Z_t|\geq b(t+1)$ 
and stepsize $\alpha_t=\alpha=\Theta(1)$,
$\min_{0\leq t<T}\E[R_S(x_t)]=\widetilde O(T^{-1})$.
\end{tcolorbox}
\end{minipage}\hfill
\begin{minipage}[t]{0.485\textwidth}
\vspace{0pt}
\begin{tcolorbox}[
  enhanced,
  colback=green!3!white,
  colframe=boxbordergreen,
  colbacktitle=boxbordergreen,
  coltext=black,
  coltitle=green!3!white,
  fonttitle=\bfseries,
  title={Proof route of {\colorlet{boxbordergreen}{green!3!white}\protect\citet{chen2024threeway}}, Appendix~C.2},
  boxrule=0.6pt,
  arc=1mm,
  left=1.5mm,
  right=1.5mm,
  top=1mm,
  bottom=1mm
]
\footnotesize
\textbf{1. Bound term $(*)$ in~\eqref{eq:m_obj_smooth}.}

\smallskip
\noindent{\setlength{\fboxsep}{1pt}%
\colorbox{green!15!white}{%
\parbox{\dimexpr\linewidth-2\fboxsep\relax}{%
\textbf{Key step: decompose $d_t$ into
$d_{\nabla F_S(x_t)} + (d_t-d_{\nabla F_S(x_t)})$
and bound the MGDA direction difference.}
Common descent and the $(1/2)$-H{\"o}lder continuity of $d_Q$ yield
\begin{align}
&\nabla f_{S,m}(x_t)^\top d_t \notag\\
&=\nabla f_{S,m}(x_t)^\top d_{\nabla F_S(x_t)}
 +\nabla f_{S,m}(x_t)^\top\bigl(d_t-d_{\nabla F_S(x_t)}\bigr) \notag\\
&\leq -R_S(x_t)+\|\nabla f_{S,m}(x_t)\|\,
  \|d_t-d_{\nabla F_S(x_t)}\| \notag\\
&\lesssim -R_S(x_t)+\|E_t\|_F^{1/2}.
\end{align}
Combining bounded gradients with the above inequality then gives
\begin{align}
&\alpha_t \nabla f_{S,m}(x_t)^\top d_t +\frac{L\alpha_t^2}{2}\|d_t\|^2 \notag\\
&\lesssim - \alpha_t R_S(x_t) + \alpha_t \|E_t\|_F^{1/2} + \alpha_t^2.
\end{align}
}}}
\smallskip

\textbf{2. Telescope.}
Summing gives
\begin{align}
\sum_{t=0}^{T-1} \alpha_t \E[R_S(x_t)]
\lesssim
1 + \sum_{t=0}^{T-1} \Big(\alpha_t\E[\|E_t\|_F^{1/2}]
+ \alpha_t^2 \Big).
\end{align}

\smallskip
\textbf{3. Insert the variance and the schedule.}
Jensen's inequality yields the fourth-root term.
\begin{align}
\min_{0\leq t\leq T-1} \E[R_S(x_t)]
&=O\left(\frac{1+\sum_{t=0}^{T-1}(\alpha_t|Z_t|^{-\frac{1}{4}}+\alpha_t^2)}{A_T}\right).
\end{align}

\smallskip
\textbf{4. Specialize the rate.}
With linear batch growth $|Z_t|\geq b(t+1)$ 
and stepsize $\alpha_t=\alpha=\Theta(T^{-1/2})$,
$\min_{0\leq t<T}\E[R_S(x_t)]=O(T^{-1/4})$.
\end{tcolorbox}
\end{minipage}
\end{center}

\begin{remark}[Two refinements of the SMG analysis in {\citet{chen2024threeway}}]
\label{rem:chen-proof-improvement}
\textbf{1) Error sum.}
\citet{chen2024threeway} average the second-moment error before taking a quarter power, introducing a logarithm under linear batch growth. 
Applying the moment bound pointwise instead removes the logarithmic factor and yields a convergence rate of $O(T^{-1/4})$. 
\textbf{2) Constant stepsize.}
Retaining $\alpha_t^2\|E_t\|_F$ in the bound for $\frac{L\alpha_t^2}{2}\|d_t\|^2$, rather than replacing it by $O(\alpha_t^2)$, allows the $\alpha_t^2R_S(x_t)$ term to be absorbed into the descent term when the constant stepsize $\alpha_t = \alpha=\Theta(1)$ is sufficiently small.
This refinement does not change the final convergence rate $O(T^{-1/4})$
because the stepsize-dependent terms are not dominant.
\end{remark}
Appendix~\ref{app:momentum-exact-mgda} extends this proof technique to MoCo and MoCo+ with exact MGDA updates. Theorem~\ref{thm:exact-moco} gives an $O(T^{-1/2})$ squared empirical PS rate for MoCo, improving the $O(T^{-1/8})$ bound obtained from equations~(C.21) and~(C.24) of \citet{chen2024threeway}. For MoCo+, Theorem~\ref{thm:exact-mocop} gives $\widetilde O(T^{-2/3})$, matching the STORM rate and, up to logarithmic factors, the exponent of the prior MoCo+ rate~\citep{cutkosky2019momentum,fernando2024vr}.

\section{Related work}
\label{sec:related-work}

\paragraph{Stochastic MGDA.}
Stochastic MGDA is a variant of MGDA 
that uses stochastic gradients instead of deterministic ones. 
\citet{mercier2018smgda} first establish asymptotic mean-square and almost-sure convergence. 
Even unbiased stochastic gradients can yield a biased MGDA direction because the weights depend on the sampled gradients~\citep{zhou2022smgm,fernando2023moco}.
To reduce this bias, \citet{liu2024smg} increase the batch size during optimization, but their convergence analysis assumes Lipschitz continuity of the optimal weights $\lambda^\star_Q$ with respect to the gradient matrix $Q$.
This assumption fails in general, as shown by \citet[Proposition~2]{zhou2022smgm}.
Momentum-based methods with one-step inexact MGDA updates bypass this assumption and reduce the bias: correlation-reduced stochastic multi-objective gradient manipulation (CR-MOGM) averages the weights~\citep{zhou2022smgm}, whereas stochastic multi-objective gradient correction (MoCo) tracks the gradients~\citep{fernando2023moco}.
Other methods use independent double sampling with single-loop updates, including multi-objective gradient with double sampling (MoDo)~\citep{chen2024threeway} and stochastic direction-oriented multi-objective gradient descent (SDMGrad)~\citep{xiao2023sdmgrad}. The adaptive sampling stochastic multigradient algorithm (ASSMG) uses adaptive batches under a Lipschitz-weight assumption~\citep{zhao2024adaptive}, whereas the variance-reduced extension MoCo+ uses a stochastic recursive momentum (STORM) estimator~\citep{cutkosky2019momentum,fernando2024vr}. Further variants include stochastic generalized smooth multi-objective gradient descent (SGSMGrad)~\citep{zhang2025generalized} and periodic stochastic multi-gradient descent (PSMGD)~\citep{xu2025psmgd}. \citet{chen2024threeway} also refine the SMG proof and overcome the limitation of the Lipschitz-weight assumption by replacing it with a proof of $(1/2)$-H\"{o}lder continuity for the MGDA direction, while \citet{huangchen2026more} identify stronger regularity conditions for Lipschitz direction continuity 
and develop a stochastic multi-objective regularity-aware (MoRe) method. 
The present analysis instead uses the Lipschitz continuity of the PS measure in Lemma~\ref{lem:ps-measure-lipschitz} 
to obtain a better rate than the growing-batch analysis of \citet{chen2024threeway}, through an argument distinct from the analyses above.
Table~\ref{tab:stochastic-mgda-comparison}
in Appendix~\ref{app:stochastic-mgda-comparison}
summarizes the sampling regimes, variance-control mechanisms, assumptions, and squared Pareto-stationarity rates of these works.

Other works consider different finite-sum settings: joint gradient balancing for data ordering (JoGBa) uses a without-replacement ordering based on joint online vector balancing~\citep{yangkwok2025jogba}, whereas the finite-sum method STIMULUS periodically invokes a full-batch gradient oracle to reset a recursive estimator~\citep{liu2025stimulus}. 
For a complementary comparison of stochastic multi-objective methods
and other relevant settings, see the survey by \citet{chen2025gradientbased}.

\paragraph{Optimization proofs assisted by large language models.}
Recent work has begun to document substantive uses of large language models (LLMs) in optimization and mathematical proof discovery. \citet{jangryu2025nag} report that ChatGPT assisted in discovering a proof that Nesterov's accelerated gradient method converges pointwise, and they describe the elicitation process. \citet{salim2025convex} studies a GPT-5-Pro-assisted proof of a convex-analysis lemma concerning the biconjugation operator. That case study reports both useful intermediate arguments and subtle model errors that required expert correction.
\citet{ma2026bdrs} reports that ChatGPT~5.5 generated a convergence proof for Bregman Douglas--Rachford splitting as a matrix-scaling method, with author verification. \citet{orabona2026omwu} identifies a ChatGPT-assisted boundary argument as the key new step in a last-iterate convergence proof for optimistic multiplicative weights. These examples illustrate how 
LLMs can accelerate optimization research and how
LLM disclosure can identify the assisted component while retaining human responsibility for the final mathematical statement.

Unlike the works above, in which the authors deliberately prompted an LLM on an explicitly posed open problem, the proof here emerged accidentally: the author was preparing course homework solutions on a question for which the best previously known rate was $\widetilde O(T^{-1/4})$~\citep{chen2024threeway}, and this note improves the rate to $\widetilde O(T^{-1})$.
Given a precise problem statement, surrounding context, and supporting lemmas, ChatGPT~5.4 TE generated the initial proof strategy for vanilla SMG. The author then checked the mathematical steps and reorganized the argument.
This note addresses a tractable question not resolved by the prior analyses summarized in Appendix~\ref{app:stochastic-mgda-comparison}.
Appendix~\ref{app:proof-discovery} further documents the proof-generation process and compares the successful and unsuccessful routes found in the sampled homework submissions.

\paragraph{Other LLM-assisted scientific research.}
LLMs have also been used in workflows that produce independently checkable optimization certificates. \citet{kimpilanci2026dual} use a coding agent to propose convex relaxations and a theory agent to test their validity and search for counterexamples, with the reported lower bounds certified by dual-feasible points checked using rigorous interval arithmetic. More broadly, the GPT-5 science-acceleration report collects mathematical case studies in which human experts verify AI-assisted results and emphasizes that models can make confident errors~\citep{bubeck2025science}.

\section{Conclusion}
\label{sec:conclusion}

This note establishes a finite-time convergence rate for vanilla SMG on smooth nonconvex stochastic MOO. Under a constant admissible stepsize and linearly growing mini-batches, the expected squared empirical PS measure converges at rate $\widetilde O(T^{-1})$, with $T$ denoting the number of iterations. The key rate improvement rests on the Lipschitz continuity of the PS measure.

The proof was discovered while the author was preparing homework for a course: ChatGPT~5.4 TE generated the initial proof strategy from the documented prompt, and the author verified, corrected, and reorganized it. The author remains responsible for all claims and any remaining errors.

The author hopes this note sheds light on how 
AI can shape future research and education.

\section*{Acknowledgments}
This work began during the development of the graduate Electrical and Computer Engineering course ``ECE-406 Introduction to Multi-objective Machine Learning'' at the University of Rochester in Spring 2026. The author thanks the University and the Electrical and Computer Engineering Graduate Education Committee for supporting AI-assisted course development and thanks the participating students for documenting their LLM prompts and responses.

\paragraph{Use of AI.}
ChatGPT~5.4 TE generated the initial proof strategy for Theorem~\ref{thm:general-smg} in response to the author-created prompt reproduced in Appendix~\ref{app:proof-discovery}. ChatGPT~5.6 Sol assisted in deriving the momentum proofs in Appendix~\ref{app:momentum-exact-mgda} and, together with Claude Fable~5, in literature searches and presentation. The author independently verified, corrected, and reorganized the proofs, checked all cited sources, made the final revisions, and takes full responsibility for the entire manuscript. No generative AI system is listed as an author or cited as a source for a mathematical claim.

\appendix
\section{Stochastic MGDA prior-work comparison}
\label{app:stochastic-mgda-comparison}

\normalsize
This appendix complements the related-work discussion with a comparison of representative stochastic MGDA methods. To keep the comparison concise, Table~\ref{tab:stochastic-mgda-comparison} reports only the best convergence rate established for each method in the corresponding paper, rather than all rates obtained under different settings. Additional assumptions and cost qualifications required for the displayed rates are stated in the table notes. The table also summarizes the batch regimes, continuity properties, and mechanisms for controlling stochastic error.
This summary is based on an extensive search for closely related work.
Among the works surveyed below, none establishes an
$\widetilde O(T^{-1})$ rate for the squared empirical PS measure of
vanilla SMG under linearly growing mini-batches or obtains such a rate
from the Lipschitz continuity of the PS measure.
This justifies the novelty of this note.

\begingroup
\normalsize
\setlength{\tabcolsep}{3pt}
\uniformtableformat
\setlength{\LTcapwidth}{\textwidth}
\begin{longtable}{>{\raggedright\arraybackslash}m{0.16\textwidth}|>{\centering\arraybackslash}m{0.085\textwidth}>{\centering\arraybackslash}m{0.155\textwidth}>{\raggedright\arraybackslash}m{0.265\textwidth}>{\centering\arraybackslash}m{0.245\textwidth}}
\caption{Comparison with stochastic MGDA methods. The rate column reports the convergence rate of the squared PS measure in each paper's own setting, including whether the objective is a population or an empirical objective and whether the guarantee is in expectation or with high probability. The symbol $R_S$ denotes the empirical measure used in this note. The notation $\widetilde O(\cdot)$ hides logarithmic factors. The displayed rates suppress dependence on the number of objectives $M$ and may rely on additional per-paper assumptions or cost qualifications stated in the table notes. Here $T$ denotes outer iterations. In the ``Used continuity'' column, ``Assumed'' denotes an additional hypothesis, ``Proved'' denotes a property established in the cited analysis, and ``---'' means that no such continuity property is invoked directly. The corresponding methods typically bypass this challenge by using independent double sampling, regularization, or a one-step inexact MGDA update in the inner loop.
Results are listed chronologically based on their availability online 
(e.g., preprint versions).}
\label{tab:stochastic-mgda-comparison}\\[6pt]
\toprule
\textbf{Method} & \textbf{Batch} & \textbf{Used continuity} & \textbf{Variance/bias control} & \textbf{Squared PS measure convergence rate} \\
\midrule
\endfirsthead
\toprule
\textbf{Method} & \textbf{Batch} & \textbf{Used continuity} & \textbf{Variance/bias control} & \textbf{Squared PS measure convergence rate} \\
\midrule
\endhead
\endfoot
\bottomrule
\endlastfoot
SMG (prior) \citep{liu2024smg}
  & $\Theta(t+1)$ & Assumed Lipschitz $\lambda^\star_Q$ & Growing batches  
  & convex rates only, no nonconvex PS rate \\
\midrule
CR-MOGM \citep{zhou2022smgm}
  & $\Theta(1)$ & --- & Momentum on weights and one inner stochastic gradient descent (SGD) step
  & $O(T^{-1/4})$ \\
\midrule
MoCo\textsuperscript{$\dagger$} \citep{fernando2023moco}
  & $\Theta(1)$ & --- & Momentum on gradients, regularization,
  and one inner SGD step
  & $O(T^{-1/2})$ \\
\midrule
MoDo \citep{chen2023threeway}
  & $\Theta(1)$ & --- & Double sampling and one inner SGD step
  & $O(T^{-1/2})$ \\
\midrule
SDMGrad \citep{xiao2023sdmgrad}
  & $\Theta(1)$ & --- & Double sampling and one inner SGD step
  & $O(T^{-1/2})$ \\
\midrule
SMG (reanalysis) \citep{chen2024threeway}
  & $\Theta(t+1)$ & \makecell[l]{Proved $\frac{1}{2}$-H\"older\\ direction $d_Q$} & Growing batches
  & $\widetilde O(T^{-1/4})$ \\
\midrule
ASSMG\textsuperscript{$\ddagger$} \citep{zhao2024adaptive}
  & Adaptive & Assumed Lipschitz $\lambda^\star_Q$ & Adaptive batches with a relative-error batch test
  & $O(T^{-1})$ \\
\midrule
MoCo+ \citep{fernando2024vr}
  & $\Theta(1)$ & --- & STORM-type momentum on gradients and one inner SGD step
  & $O(T^{-2/3})$ \\
\midrule
SGSMGrad \citep{zhang2025generalized}
  & $\Theta(1)$ & --- & Double sampling and high-probability control
  & $O(T^{-1/2})$ \\
\midrule
PSMGD \citep{xu2025psmgd}
  & $\Theta(1)$ & --- & Periodic subprograms and blended stale weights
  & $O(T^{-1/2})$ \\
\midrule
MoRe \citep{huangchen2026more}
  & $\Theta(t+1)$ & \makecell[l]{Proved Lipschitz \\ regular $d_Q$} & Regularity test and scalarization fallback
  & $\widetilde O(T^{-1/2})$ \\
\midrule
\rowcolor{green!3!white}
\textbf{SMG (reanalysis, this work)}\par
Theorem~\ref{thm:general-smg}
  & $\Theta(t+1)$ & \makecell[l]{Proved Lipschitz \\ norm $\|d_Q\|$} 
  & Growing batches
  & $\widetilde O(T^{-1})$ \\
\midrule
\rowcolor{green!3!white}
\textbf{MoCo (exact MGDA, 
reanalysis, this work)}\par
Theorem~\ref{thm:exact-moco}
  & $\Theta(1)$ & \makecell[l]{Proved Lipschitz \\ norm $\|d_Q\|$}
  & Exponential-type momentum on gradients
  & $O(T^{-1/2})$ \\
\midrule
\rowcolor{green!3!white}
\textbf{MoCo+ (exact MGDA, this work)}\par
Theorem~\ref{thm:exact-mocop}
  & $\Theta(1)$ & \makecell[l]{Proved Lipschitz \\ norm $\|d_Q\|$}
  & STORM-type momentum on gradients
  & $\widetilde O(T^{-2/3})$ \\
\end{longtable}
\vspace{-0.5\baselineskip}
\noindent\footnotesize\raggedright
\textsuperscript{$\dagger$}The $O(T^{-1/2})$ rate in \citet[Theorem~3]{fernando2023moco} assumes bounded gradients and objective values along the trajectory, together with a gradient-bias condition that is automatic for unbiased oracles. Their Theorem~1 gives $O(T^{-1/12})$ under a weaker bias condition and without bounded objective values.\par
\textsuperscript{$\ddagger$}The method uses adaptive batches, and no bound on total stochastic-gradient evaluations is provided.\par
The SMG reanalysis of \citet[Eq.~(C.23)]{chen2024threeway} uses bounded gradients; the results proved in this note do not impose that assumption.\par
\endgroup

\section{Momentum-based stochastic MGDA with exact MGDA updates}
\label{app:momentum-exact-mgda}

This appendix extends the convergence analysis of Theorem~\ref{thm:general-smg}
to variants of MoCo and MoCo+ with exact MGDA updates~\citep{fernando2023moco,fernando2024vr}.
With assistance from ChatGPT~5.6 Sol, the author combined the proof technique of Theorem~\ref{thm:general-smg} with the single-objective momentum analyses of \citet{guo2021adam} for MoCo and \citet{cutkosky2019momentum} for MoCo+.
Both methods use a fixed batch size $|Z_t|\equiv|Z_0|\geq1$ and solve the unregularized MGDA subproblem exactly at each iteration.
For each objective $m\in[M]$, let $y_{t,m}\in\R^p$ denote the momentum
estimate of $\nabla f_{S,m}(x_t)$ at iteration $t$.
Collect these estimates in the matrix $Y_t\in\R^{p\times M}$,
and define the estimation error $E_t$ (with $Y_t$ in place of $Q_t$), its mean squared norm $e_t$, and the variance bound
$\bar\sigma^2$ by
\begin{equation}
Y_t=[y_{t,1},\ldots,y_{t,M}],\qquad
E_t:=Y_t-\nabla F_S(x_t),\qquad
e_t:=\E\big[\|E_t\|_F^2\big],
\qquad
\bar\sigma^2:=\frac{M\sigma^2}{|Z_0|}.
\label{eq:momentum-tracker-notation}
\end{equation}
Both estimators are initialized as $Y_0=\nabla F_{Z_0}(x_0)$ at a deterministic
initial point $x_0$. The MoCo and MoCo+ recursions are given in
\eqref{eq:exact-moco-tracker} and \eqref{eq:exact-mocop-tracker}, respectively.
Throughout this appendix, expectations are taken over the sampled batches
and, where applicable, the independently sampled output index. At iteration $t$, with stepsize
$\alpha_t>0$, both algorithms compute
\begin{subequations}
\begin{align}
\lambda_t&\in\argmin_{\lambda\in\Delta^M}\|Y_t\lambda\|^2,
\label{eq:momentum-exact-weight}\\
d_t&:=-Y_t\lambda_t,
\label{eq:momentum-exact-direction}\\
x_{t+1}&:=x_t+\alpha_t d_t.
\label{eq:momentum-exact-update}
\end{align}
\end{subequations}
Thus, $d_t=d_{Y_t}$ is the exact MGDA direction associated with the gradient
estimator $Y_t$.

All stepsizes and momentum coefficients are deterministic. Each new
batch is sampled uniformly with replacement from $S$, independently of
previously sampled batches. We impose the uniform version of
Assumption~\ref{ass:oracle}: let $\mathcal H$ be the $\sigma$-algebra
generated by the history before a new batch $Z$ of size
$|Z|=|Z_t|=\Theta(1)$ is sampled. For every $\mathcal H$-measurable query point
$x$, almost surely,
\begin{equation}
 \E\big[\nabla F_Z(x)\mid\mathcal H\big]=\nabla F_S(x),
 \qquad
 \E\big[\|\nabla F_Z(x)-\nabla F_S(x)\|_F^2\mid\mathcal H\big]
 \leq\bar\sigma^2.
 \label{eq:momentum-uniform-oracle}
\end{equation}
This condition follows from the uniform per-sample variance bound discussed
after Assumption~\ref{ass:oracle}. Assumption~\ref{ass:smooth-lower} also implies
\begin{equation}
\|\nabla F_S(x)-\nabla F_S(x')\|_F
\leq\sqrt M L\|x-x'\|,
\qquad
\|\nabla F_Z(x)-\nabla F_Z(x')\|_F
\leq\sqrt M L\|x-x'\|.
\label{eq:momentum-matrix-smoothness}
\end{equation}

\begin{corollary}[Descent and stationarity bounds for exact MGDA updates]
\label{lem:momentum-descent-transfer}
Suppose Assumption~\ref{ass:smooth-lower} holds. 
Given an iterate $x_t\in\R^p$ and
a gradient estimator $Y_t\in\R^{p\times M}$, let $\lambda_t$, $d_t$,
and $x_{t+1}$ be computed by the exact MGDA update
\eqref{eq:momentum-exact-weight}--\eqref{eq:momentum-exact-update}, and set
$E_t:=Y_t-\nabla F_S(x_t)$.
If $0<\alpha_t\leq1/(2L)$, then, for every $m\in[M]$,
\begin{align}
f_{S,m}(x_{t+1})
&\leq f_{S,m}(x_t)-\frac{\alpha_t}{4}\|d_t\|^2
+\frac{\alpha_t}{2}\|E_t\|_F^2,
\label{eq:momentum-one-step-descent}\\
R_S(x_t)
&\leq2\|d_t\|^2+2\|E_t\|_F^2.
\label{eq:momentum-residual-transfer}
\end{align}
These bounds hold for each realization of $x_t$ and $Y_t$ and therefore
apply to both MoCo with an exact MGDA update (Algorithm~\ref{alg:exact-moco}) and
MoCo+ with an exact MGDA update (Algorithm~\ref{alg:exact-mocop}) whenever the above stepsize  condition holds.
\end{corollary}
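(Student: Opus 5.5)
The argument is the deterministic, realization-wise core of the proof of Theorem~\ref{thm:general-smg}, with $Q_t$ replaced by the arbitrary matrix $Y_t$ and no expectations taken. Fix $m\in[M]$ and write $\xi_{m,t}:=y_{t,m}-\nabla f_{S,m}(x_t)$, the $m$-th column of $E_t$, so that $\|\xi_{m,t}\|\leq\|E_t\|_F$.

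\textbf{Step 1: common descent for $Y_t$.} The first step is the analogue of Lemma~\ref{lem:stochastic-common-descent} for $Y_t$. That lemma's proof uses only that $-d_t$ is the projection of the origin onto $\operatorname{conv}\{y_{t,1},\ldots,y_{t,M}\}$, which holds for any matrix. Hence $y_{t,m}^\top d_t\leq-\|d_t\|^2$ for every $m$.

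\textbf{Step 2: one-step descent.} Decompose $\nabla f_{S,m}(x_t)=y_{t,m}-\xi_{m,t}$ and apply Young's inequality $-\xi_{m,t}^\top d_t\leq\frac12\|d_t\|^2+\frac12\|\xi_{m,t}\|^2$. This gives
\[
\nabla f_{S,m}(x_t)^\top d_t\leq-\tfrac12\|d_t\|^2+\tfrac12\|\xi_{m,t}\|^2 .
\]
Insert this into the $L$-smoothness inequality $f_{S,m}(x_{t+1})\leq f_{S,m}(x_t)+\alpha_t\nabla f_{S,m}(x_t)^\top d_t+\frac{L\alpha_t^2}{2}\|d_t\|^2$. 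The coefficient of $\|d_t\|^2$ becomes $-\frac{\alpha_t}{2}(1-L\alpha_t)\leq-\frac{\alpha_t}{4}$, using $L\alpha_t\leq 1/2$. Finally, bound $\|\xi_{m,t}\|^2\leq\|E_t\|_F^2$ to obtain \eqref{eq:momentum-one-step-descent}.

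\textbf{Step 3: stationarity transfer.} Apply Lemma~\ref{lem:ps-measure-lipschitz} with $Q=\nabla F_S(x_t)$ and $Q'=Y_t$. This gives
\[
\sqrt{R_S(x_t)}=r(\nabla F_S(x_t))\leq r(Y_t)+\|E_t\|_F=\|d_t\|+\|E_t\|_F .
\]
Squaring and using $(a+b)^2\leq2a^2+2b^2$ yields \eqref{eq:momentum-residual-transfer}.

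No step is a genuine obstacle. The only point requiring care is confirming that Lemma~\ref{lem:stochastic-common-descent} and Lemma~\ref{lem:ps-measure-lipschitz} are purely deterministic statements about a matrix and its exact MGDA solution. They nowhere use unbiasedness, the mini-batch structure, or the filtration, so they apply pathwise to any estimator $Y_t$, including the MoCo and MoCo+ trackers. This is also why the conclusion holds for every realization and transfers to both algorithms under the stepsize condition.
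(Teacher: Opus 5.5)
Your proposal is correct and follows the paper's proof essentially step for step. The shared steps are: common descent for $Y_t$ via the projection argument of Lemma~\ref{lem:stochastic-common-descent}; Young's inequality together with $L$-smoothness, $L\alpha_t\leq 1/2$, and $\|e_{t,m}\|\leq\|E_t\|_F$ to obtain \eqref{eq:momentum-one-step-descent}; and Lemma~\ref{lem:ps-measure-lipschitz} with $r(Y_t)=\|d_t\|$, followed by $(a+b)^2\leq 2a^2+2b^2$, to obtain \eqref{eq:momentum-residual-transfer}.
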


\begin{proof}
For $m\in[M]$, define $e_{t,m}:=y_{t,m}-\nabla f_{S,m}(x_t)$,
the $m$-th column of $E_t$. Applying Lemma~\ref{lem:stochastic-common-descent}
to $Y_t$ gives
\begin{align}
 \nabla f_{S,m}(x_t)^\top d_t
 &\leq-\|d_t\|^2-e_{t,m}^\top d_t
 &&\text{by Lemma~\ref{lem:stochastic-common-descent}} \notag\\
 &\leq-\frac12\|d_t\|^2+\frac12\|e_{t,m}\|^2
 &&\text{by Young's inequality}.
 \label{eq:momentum-common-descent}
\end{align}
Combining \eqref{eq:momentum-common-descent} with $L$-smoothness,
\eqref{eq:momentum-exact-update}, $\|e_{t,m}\|\leq\|E_t\|_F$, and
$L\alpha_t\leq1/2$ proves \eqref{eq:momentum-one-step-descent}.
Lemma~\ref{lem:ps-measure-lipschitz}, with $r(Y_t)=\|d_t\|$, gives
\begin{equation}
 \sqrt{R_S(x_t)}
 =r\bigl(\nabla F_S(x_t)\bigr)
 \leq\|d_t\|+\|E_t\|_F.
 \label{eq:momentum-residual-continuity}
\end{equation}
Squaring \eqref{eq:momentum-residual-continuity} and using
$(a+b)^2\leq2a^2+2b^2$ proves \eqref{eq:momentum-residual-transfer}.
\end{proof}

\begin{keyinsightbox}
\textbf{Outline of the convergence analysis.}
Corollary~\ref{lem:momentum-descent-transfer} bounds the objective decrease
and empirical PS measure in terms of the MGDA direction and estimation
error. Each analysis then controls $e_t$ using the corresponding estimator
recursion. Let $\beta_{t+1}\in(0,1]$ denote the MoCo momentum coefficient
in \eqref{eq:exact-moco-tracker}. The MoCo analysis uses the weight
$\alpha_t/\beta_{t+1}$ to control the contribution of changes in the
empirical gradient. For MoCo+, gradients are evaluated at $x_t$ and
$x_{t+1}$ using the same new batch. A Lyapunov function with estimation
error weighted by the inverse stepsize controls the resulting gradient
difference.
\end{keyinsightbox}

\subsection{MoCo with an exact MGDA update}

MoCo uses an exponentially weighted gradient estimator
\begin{equation}
 Y_{t+1}
 =(1-\beta_{t+1})Y_t
 +\beta_{t+1}\nabla F_{Z_{t+1}}(x_{t+1}),
 \qquad 0<\beta_{t+1}\leq1.
 \label{eq:exact-moco-tracker}
\end{equation}
The order of operations in Algorithm~\ref{alg:exact-moco} ensures that
$x_{t+1}$ is determined before the new batch $Z_{t+1}$ is sampled.

\begin{algorithm}[H]
\small
\caption{MoCo with an exact MGDA update}
\label{alg:exact-moco}
\begin{algorithmic}[1]
\State \textbf{Input:} training dataset $S$, initial point $x_0$, horizon
$T$, batch size $|Z_t|$, stepsizes $\{\alpha_t\}_{t=0}^{T-1}$, and momentum
coefficients $\{\beta_{t+1}\}_{t=0}^{T-1}$.
\State Sample $Z_0$ and set $Y_0=\nabla F_{Z_0}(x_0)$.
\For{$t=0,\ldots,T-1$}
  \State Compute $\lambda_t$ and $d_t$ by
  \eqref{eq:momentum-exact-weight}--\eqref{eq:momentum-exact-direction}.
  \State Update $x_{t+1}$ by \eqref{eq:momentum-exact-update}.
  \State Sample a new batch $Z_{t+1}$.
  \State Update $Y_{t+1}$ by \eqref{eq:exact-moco-tracker}.
\EndFor
\State Sample $\tau\in\{0,\ldots,T-1\}$ independently with
$\Pr(\tau=t)=\alpha_t/\sum_{s=0}^{T-1}\alpha_s$.
\State \textbf{Output:} $x_\tau$.
\end{algorithmic}
\end{algorithm}

\begin{theorem}[Convergence of MoCo with an exact MGDA update]
\label{thm:exact-moco}
Let $\{x_t,Y_t\}_{t=0}^T$ be generated by Algorithm~\ref{alg:exact-moco}
from a deterministic $x_0$, with $Y_0=\nabla F_{Z_0}(x_0)$, fixed batch size
$|Z_t|\equiv|Z_0|\geq1$, and horizon $T\geq1$.
Suppose that Assumptions~\ref{ass:smooth-lower}--\ref{ass:oracle} and
\eqref{eq:momentum-uniform-oracle} hold.  Fix $m\in[M]$, let
$\rho_t:=\alpha_t/\beta_{t+1}$, and suppose that
$0<\alpha_t\leq1/(2L)$, $0<\beta_{t+1}\leq1$, and $\{\rho_t\}$ is
nonincreasing with $\rho_0\leq1/(\sqrt{24M}L)$.  With
$A_T:=\sum_{t=0}^{T-1}\alpha_t$, let $x_\tau$ be the algorithm's output,
where $\tau\in\{0,\ldots,T-1\}$ is independent of the sampled batches and
$\Pr(\tau=t)=\alpha_t/A_T$. Then we have
\begin{align}
 \min_{0\leq t\leq T-1}\E\big[R_S(x_t)\big]
 &\leq\E\big[R_S(x_\tau)\big]
 =\frac1{A_T}\sum_{t=0}^{T-1}
 \alpha_t\E\big[R_S(x_t)\big] \notag\\
 &\leq\frac{20}{A_T}\left[
 f_{S,m}(x_0)-f_{S,m}^{\inf}
 +\rho_0e_0
 +\bar\sigma^2\sum_{t=0}^{T-1}\alpha_t\beta_{t+1}\right].
 \label{eq:exact-moco-ps-bound}
\end{align}
Since $e_0\leq\bar\sigma^2$, choosing
$\alpha_t=c_\alpha T^{-1/2}$ and
$\beta_{t+1}=c_\beta T^{-1/2}$ gives $O(T^{-1/2})$, whereas choosing
$\alpha_t=c_\alpha(t+1)^{-1/2}$ and
$\beta_{t+1}=c_\beta(t+1)^{-1/2}$ gives
$\widetilde O(T^{-1/2})$, provided
$0<c_\alpha\leq1/(2L)$, $0<c_\beta\leq1$, and
$c_\alpha/c_\beta\leq1/(\sqrt{24M}L)$.
\end{theorem}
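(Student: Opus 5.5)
The plan is to combine the per-realization bounds of Corollary~\ref{lem:momentum-descent-transfer} with a recursion for the tracking error $e_t=\E[\|E_t\|_F^2]$. The weight $\rho_t=\alpha_t/\beta_{t+1}$ will make the error contributions telescope. The constant $20$ should then come out of a short absorption argument.

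\textbf{Step 1 (error recursion).} From \eqref{eq:exact-moco-tracker}, write
\begin{equation*}
E_{t+1}=(1-\beta_{t+1})\bigl(E_t+D_t\bigr)+\beta_{t+1}N_{t+1},
\end{equation*}
where $D_t:=\nabla F_S(x_t)-\nabla F_S(x_{t+1})$ and $N_{t+1}:=\nabla F_{Z_{t+1}}(x_{t+1})-\nabla F_S(x_{t+1})$.
\begin{itemize}
\item Since $x_{t+1}$ is determined before $Z_{t+1}$ is drawn, \eqref{eq:momentum-uniform-oracle} makes the cross term vanish in conditional expectation and bounds the noise by $\beta_{t+1}^2\bar\sigma^2$.
\item By \eqref{eq:momentum-matrix-smoothness} and \eqref{eq:momentum-exact-update}, $\|D_t\|_F\leq\sqrt M L\alpha_t\|d_t\|$.
\item Young's inequality gives $\|E_t+D_t\|_F^2\leq(1+\beta)\|E_t\|_F^2+(1+1/\beta)\|D_t\|_F^2$. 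Together with $(1-\beta)^2(1+\beta)\leq1-\beta$ and $(1-\beta)^2(1+1/\beta)\leq1/\beta$, this yields
\begin{equation*}
e_{t+1}\leq(1-\beta_{t+1})e_t+\frac{ML^2\alpha_t^2}{\beta_{t+1}}\E\|d_t\|^2+\beta_{t+1}^2\bar\sigma^2.
\end{equation*}
\end{itemize}

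\textbf{Step 2 (telescoping the error).} Rearrange the recursion to isolate $\beta_{t+1}e_t$, then multiply by $\rho_t$. This gives
\begin{equation*}
\alpha_te_t\leq\rho_t(e_t-e_{t+1})+ML^2\rho_t^2\alpha_t\E\|d_t\|^2+\alpha_t\beta_{t+1}\bar\sigma^2.
\end{equation*}
Because $\rho_t$ is nonincreasing and $e_t\geq0$, summation by parts gives $\sum_{t=0}^{T-1}\rho_t(e_t-e_{t+1})\leq\rho_0e_0$; this is exactly where monotonicity of $\rho_t$ is used. Using $\rho_t\leq\rho_0\leq1/(\sqrt{24M}L)$, so that $ML^2\rho_t^2\leq1/24$, we get
\begin{equation*}
\sum_t\alpha_te_t\leq\rho_0e_0+\tfrac1{24}\sum_t\alpha_t\E\|d_t\|^2+\bar\sigma^2\sum_t\alpha_t\beta_{t+1}.
\end{equation*}

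\textbf{Step 3 (absorption and conclusion).} Take expectations in \eqref{eq:momentum-one-step-descent}, telescope, and use $f_{S,m}\geq f_{S,m}^{\inf}$. With $\Delta:=f_{S,m}(x_0)-f_{S,m}^{\inf}$, this gives $\sum_t\alpha_t\E\|d_t\|^2\leq4\Delta+2\sum_t\alpha_te_t$.
\begin{itemize}
\item Substituting into Step~2 gives $\tfrac{11}{12}\sum_t\alpha_te_t\leq\rho_0e_0+\bar\sigma^2\sum_t\alpha_t\beta_{t+1}+\Delta/6$.
\item By \eqref{eq:momentum-residual-transfer}, $\sum_t\alpha_t\E R_S(x_t)\leq8\Delta+6\sum_t\alpha_te_t$.
\item Combining these, the coefficient of $\Delta$ is $8+\tfrac{72}{11}\cdot\tfrac16=\tfrac{100}{11}$, and the coefficient of $\rho_0e_0+\bar\sigma^2\sum_t\alpha_t\beta_{t+1}$ is $\tfrac{72}{11}$. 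Both are at most $20$.
\end{itemize}
Dividing by $A_T$ and using the independence of $\tau$ together with the minimum--average inequality, as in Theorem~\ref{thm:general-smg}, yields \eqref{eq:exact-moco-ps-bound}. Finiteness of all expectations follows by induction, as in that proof.

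\textbf{Rates.} We have $e_0\leq\bar\sigma^2$ since $Y_0=\nabla F_{Z_0}(x_0)$. For the constant choices, $A_T=c_\alpha\sqrt T$ and $\sum_t\alpha_t\beta_{t+1}=c_\alpha c_\beta$, so the bound is $O(T^{-1/2})$. For the decaying choices, $\rho_t=c_\alpha/c_\beta$ is constant (hence nonincreasing), $A_T\gtrsim\sqrt T$, and $\sum_t\alpha_t\beta_{t+1}\leq c_\alpha c_\beta(1+\log T)$, so the bound is $\widetilde O(T^{-1/2})$.

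\textbf{Main obstacle.} I expect Step~2 to be the delicate part: choosing the multiplier $\rho_t$ and handling the Young's-inequality constants so that the $\E\|d_t\|^2$ term comes with a small enough coefficient to be absorbed. Keeping track of the boundary term $\rho_{T-1}e_T$, which is nonnegative and can simply be dropped, is the other point to handle carefully.
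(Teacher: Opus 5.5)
Your proposal is correct and follows essentially the paper's argument. It uses the same $\rho_t$-weighted error recursion, the descent and residual-transfer bounds of Corollary~\ref{lem:momentum-descent-transfer}, and the monotonicity of $\rho_t$ to control the changes in the weights. The differences are only in bookkeeping: you telescope the weighted error recursion (by summation by parts) and the descent inequality separately and then absorb, whereas the paper packages both into the single Lyapunov energy $\mathcal V_t=\E[f_{S,m}(x_t)]+\rho_{t-1}e_t$. Your Young's inequality step also gives the slightly sharper constant $1$ instead of $3$ in front of $ML^2\alpha_t^2\E[\|d_t\|^2]/\beta_{t+1}$.
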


The rate without a logarithmic factor uses parameters that are constant
across iterations and depend on the horizon $T$. The theorem also permits
diminishing parameter sequences.

\begin{proof}
Fix $m\in[M]$ and let $\mathcal H_t:=\sigma(Z_0,\ldots,Z_t)$ be the
history before Algorithm~\ref{alg:exact-moco} samples $Z_{t+1}$.
In particular, $x_t,Y_t,d_t$, and $x_{t+1}$ are $\mathcal H_t$-measurable.
For any finite horizon, the finite dataset and deterministic parameter
sequences imply that the iterates and gradient estimators have finite
support. Hence, all expectations below are finite. Set
$\zeta_{t+1}:=\nabla F_{Z_{t+1}}(x_{t+1})-\nabla F_S(x_{t+1})$ and
$\Delta_t:=\nabla F_S(x_t)-\nabla F_S(x_{t+1})$.
By \eqref{eq:momentum-uniform-oracle},
$\E[\zeta_{t+1}\mid\mathcal H_t]=0$ and
$\E[\|\zeta_{t+1}\|_F^2\mid\mathcal H_t]\leq\bar\sigma^2$.

Recall from \eqref{eq:momentum-tracker-notation} that 
$e_t=\E[\|E_t\|_F^2] = \E[\|Y_t-\nabla F_S(x_t)\|_F^2]$.
After subtracting $\nabla F_S(x_{t+1})$ from
\eqref{eq:exact-moco-tracker}, we take conditional second moments given
$\mathcal H_t$ and apply Young's inequality and the law of total
expectation to obtain that
\begin{align}
e_{t+1}
&=(1-\beta_{t+1})^2
\E\big[\|E_t+\Delta_t\|_F^2\big]
+\beta_{t+1}^2\E\big[\|\zeta_{t+1}\|_F^2\big]
&&\text{by conditional unbiasedness} \notag\\
&\leq(1-\beta_{t+1})e_t
+\frac{3}{\beta_{t+1}}\E\big[\|\Delta_t\|_F^2\big]
+\beta_{t+1}^2\bar\sigma^2
&&\text{by Young's inequality} \notag\\
&\leq(1-\beta_{t+1})e_t
+\frac{3ML^2\alpha_t^2}{\beta_{t+1}}
\E\big[\|d_t\|^2\big]
+\beta_{t+1}^2\bar\sigma^2
&&\text{by \eqref{eq:momentum-matrix-smoothness}
and \eqref{eq:momentum-exact-update}}.
\label{eq:exact-moco-tracker-recursion}
\end{align}
For the first inequality in \eqref{eq:exact-moco-tracker-recursion},
Young's inequality with parameter $\beta_{t+1}/2$ gives
$\|E_t+\Delta_t\|_F^2\leq(1+\beta_{t+1}/2)\|E_t\|_F^2
+(1+2/\beta_{t+1})\|\Delta_t\|_F^2$.
Multiplying by $(1-\beta_{t+1})^2$ and using $0<\beta_{t+1}\leq1$ yields
the coefficient bounds
$(1-\beta_{t+1})^2(1+\beta_{t+1}/2)\leq1-\beta_{t+1}$ and
$(1-\beta_{t+1})^2(1+2/\beta_{t+1})\leq 1 \leq 3/\beta_{t+1}$.
The noise term is bounded by
$\E[\|\zeta_{t+1}\|_F^2]\leq\bar\sigma^2$ using
\eqref{eq:momentum-uniform-oracle} and total expectation.
For the last inequality, \eqref{eq:momentum-matrix-smoothness} and
\eqref{eq:momentum-exact-update} imply
$\|\Delta_t\|_F^2\leq ML^2\|x_{t+1}-x_t\|^2
=ML^2\alpha_t^2\|d_t\|^2$.

Recall that $\rho_t=\alpha_t/\beta_{t+1}$.
Set $\rho_{-1}:=\rho_0$ and define the Lyapunov energy
$\mathcal V_t:=\E[f_{S,m}(x_t)]+\rho_{t-1}e_t$.
Taking expectations in the descent bound \eqref{eq:momentum-one-step-descent} of
Corollary~\ref{lem:momentum-descent-transfer} and applying the estimator
recursion and parameter conditions in Theorem~\ref{thm:exact-moco} gives
\begin{align}
&\mathcal V_{t+1}-\mathcal V_t \notag\\
&\leq-\frac{\alpha_t}{4}\E\big[\|d_t\|^2\big]
  +\frac{\alpha_t}{2}e_t+\rho_t e_{t+1}-\rho_{t-1}e_t
&&\text{by Corollary~\ref{lem:momentum-descent-transfer}} \notag\\
&\leq -\alpha_t\left(\frac14-\frac{3ML^2\rho_t\alpha_t}{\beta_{t+1}}\right)
  \E\big[\|d_t\|^2\big]
  +\left(\frac{\alpha_t}{2}+\rho_t(1-\beta_{t+1})-\rho_{t-1}\right)e_t
  +\rho_t\beta_{t+1}^2\bar\sigma^2
&&\text{by \eqref{eq:exact-moco-tracker-recursion}} \notag\\
&= -\alpha_t\left(\frac14-3ML^2\rho_t^2\right)
  \E\big[\|d_t\|^2\big]
  -\left(\frac{\alpha_t}{2}+\rho_{t-1}-\rho_t\right)e_t
  +\alpha_t\beta_{t+1}\bar\sigma^2
&&\text{since $\rho_t\beta_{t+1}=\alpha_t$} \notag\\
&\leq -\frac{\alpha_t}{8}\E\big[\|d_t\|^2\big]
  -\frac{\alpha_t}{2}e_t+\alpha_t\beta_{t+1}\bar\sigma^2
&&\text{see below}.
\label{eq:exact-moco-lyapunov-descent}
\end{align}
The last inequality uses the parameter conditions in the statement of
Theorem~\ref{thm:exact-moco}: $\{\rho_t\}$ is nonincreasing and
$\rho_0\leq1/(\sqrt{24M}L)$. Thus, $\rho_t\leq\rho_{t-1}$ and
$3ML^2\rho_t^2\leq3ML^2\rho_0^2\leq1/8$, which imply
$\frac14-3ML^2\rho_t^2\geq\frac18$ and
$\frac{\alpha_t}{2}+\rho_{t-1}-\rho_t\geq\frac{\alpha_t}{2}$.
Substituting these coefficient bounds and using
$\E[\|d_t\|^2]\geq0$ and $e_t\geq0$ gives the last line of
\eqref{eq:exact-moco-lyapunov-descent}. The convention
$\rho_{-1}=\rho_0$ ensures that the same argument applies at $t=0$.
Summing \eqref{eq:exact-moco-lyapunov-descent} and using
$f_{S,m}(x_T)\geq f_{S,m}^{\inf}$ and $e_T\geq0$ give
\begin{align}
 \sum_{t=0}^{T-1}\alpha_t
 \left(\frac18\E\big[\|d_t\|^2\big]+\frac12e_t\right)
 &\leq f_{S,m}(x_0)-f_{S,m}^{\inf}+\rho_0e_0
 +\bar\sigma^2\sum_{t=0}^{T-1}\alpha_t\beta_{t+1}
 &&\text{by telescoping}
 \label{eq:exact-moco-joint-bound}
\\
 \frac1{A_T}\sum_{t=0}^{T-1}
 \alpha_t\E\big[R_S(x_t)\big]
 &\leq
 \frac2{A_T}\sum_{t=0}^{T-1}\alpha_t
 \left(\E\big[\|d_t\|^2\big]+e_t\right)
 &&\text{by Corollary~\ref{lem:momentum-descent-transfer},
 \eqref{eq:momentum-residual-transfer}} \notag\\
 &\leq\frac{20}{A_T}\left[
 f_{S,m}(x_0)-f_{S,m}^{\inf}+\rho_0e_0
 +\bar\sigma^2\sum_{t=0}^{T-1}\alpha_t\beta_{t+1}\right]
 &&\text{by \eqref{eq:exact-moco-joint-bound}}.
 \label{eq:exact-moco-residual-sum}
\end{align}
By the sampling distribution of $\tau$, the weighted average equals
$\E[R_S(x_\tau)]$, proving \eqref{eq:exact-moco-ps-bound}.
Finally, \eqref{eq:momentum-uniform-oracle} gives
$e_0\leq\bar\sigma^2$.  For the horizon-dependent schedule,
$A_T=c_\alpha T^{1/2}$ and
$\sum_{t=0}^{T-1}\alpha_t\beta_{t+1}=c_\alpha c_\beta$; for the
diminishing schedule, $A_T=\Theta(T^{1/2})$ and the latter sum is
$O(1+\log T)$.  These relations establish the stated rates.
\end{proof}

\subsection{MoCo+ with an exact MGDA update}

MoCo+ uses a corrected momentum gradient estimator of STORM type with
coefficient $\gamma_{t+1}\in(0,1]$. At iteration $t$, the exact MGDA update
\eqref{eq:momentum-exact-weight}--\eqref{eq:momentum-exact-update} first
determines $x_{t+1}$. A new batch $Z_{t+1}$ is then sampled, and the
gradient matrices at $x_t$ and $x_{t+1}$ are evaluated using this batch:
\begin{equation}
 Y_{t+1}
 :=\nabla F_{Z_{t+1}}(x_{t+1})
 +(1-\gamma_{t+1})
 \bigl(Y_t-\nabla F_{Z_{t+1}}(x_t)\bigr),
 \qquad 0<\gamma_{t+1}\leq1.
 \label{eq:exact-mocop-tracker}
\end{equation}
Using the same batch at both points allows the variance of the gradient
difference to be bounded in terms of $\|x_{t+1}-x_t\|^2$.
The proof first establishes the resulting estimator error recursion,
then controls the change in the inverse stepsize. These bounds are
combined with Corollary~\ref{lem:momentum-descent-transfer} through a
Lyapunov function that weights the mean squared estimation error by the
inverse stepsize. Algorithm~\ref{alg:exact-mocop} specifies the sampling
order and the exact MGDA update.

\begin{algorithm}[H]
\small
\caption{MoCo+ with an exact MGDA update}
\label{alg:exact-mocop}
\begin{algorithmic}[1]
\State \textbf{Input:} training dataset $S$, initial point $x_0$, horizon
$T$, batch size $|Z_t|$, stepsizes $\{\alpha_t\}_{t=0}^{T-1}$, and momentum
coefficients $\{\gamma_{t+1}\}_{t=0}^{T-1}$.
\State Sample $Z_0$ and set $Y_0=\nabla F_{Z_0}(x_0)$.
\For{$t=0,\ldots,T-1$}
  \State Compute $\lambda_t$ and $d_t$ by
  \eqref{eq:momentum-exact-weight}--\eqref{eq:momentum-exact-direction}.
  \State Update $x_{t+1}$ by \eqref{eq:momentum-exact-update}.
  \State Sample a new batch $Z_{t+1}$ and evaluate its gradient matrices at
  both $x_t$ and $x_{t+1}$.
  \State Update $Y_{t+1}$ by \eqref{eq:exact-mocop-tracker}.
\EndFor
\State Sample $\tau\in\{0,\ldots,T-1\}$ independently with
$\Pr(\tau=t)=\alpha_t/\sum_{s=0}^{T-1}\alpha_s$.
\State \textbf{Output:} $x_\tau$.
\end{algorithmic}
\end{algorithm}

\begin{theorem}[Convergence of MoCo+ with an exact MGDA update]
\label{thm:exact-mocop}
Let $\{x_t,Y_t\}_{t=0}^T$ be generated by Algorithm~\ref{alg:exact-mocop}
from a deterministic $x_0$, with $Y_0=\nabla F_{Z_0}(x_0)$, fixed batch size
$|Z_t|\equiv|Z_0|\geq1$, and horizon $T\geq1$.
Suppose that Assumptions~\ref{ass:smooth-lower}--\ref{ass:oracle} and
\eqref{eq:momentum-uniform-oracle} hold.  Fix $m\in[M]$ and
$\nu\in[1/3,1/2]$ (the stepsize decay exponent), and set
$\alpha_t=\alpha_0(t+1)^{-\nu}$,
$\gamma_{t+1}=c_\gamma\alpha_t^2$,
$c_\nu=\nu 2^\nu/\alpha_0^2$, and
$\kappa=(c_\gamma-c_\nu)^{-1}$, where
$0<\alpha_0\leq\sqrt{1-\nu2^\nu}/(4\sqrt M\,L)$ and
$c_\nu+16ML^2\leq c_\gamma\leq1/\alpha_0^2$.
With $A_T:=\sum_{t=0}^{T-1}\alpha_t$, let $x_\tau$ be the algorithm's output,
where $\tau\in\{0,\ldots,T-1\}$ is independent of the sampled batches and
$\Pr(\tau=t)=\alpha_t/A_T$. Then
\begin{align}
 \min_{0\leq t\leq T-1}\E\big[R_S(x_t)\big]
 &\leq\E\big[R_S(x_\tau)\big]
 =\frac1{A_T}\sum_{t=0}^{T-1}
 \alpha_t\E\big[R_S(x_t)\big] \nonumber \\
 &\leq\frac{20}{A_T}\left[
 f_{S,m}(x_0)-f_{S,m}^{\inf}
 +\frac{\kappa e_0}{\alpha_0}
 +2\kappa c_\gamma^2\bar\sigma^2
 \sum_{t=0}^{T-1}\alpha_t^3\right].
 \label{eq:exact-mocop-ps-bound}
\end{align}
Consequently, for fixed $|Z_t|$, the rate is
$\widetilde O(T^{-2/3})$ when $\nu=1/3$ and
$O(T^{-(1-\nu)})$ when $1/3<\nu\leq1/2$.
\end{theorem}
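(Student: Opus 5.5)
The plan follows the MoCo proof. Corollary~\ref{lem:momentum-descent-transfer} supplies both the per-step descent and the transfer to $R_S$; the new ingredient is a STORM-type recursion for $e_t$, weighted by an inverse stepsize in a Lyapunov function. Fix $m$ and let $\mathcal H_t:=\sigma(Z_0,\ldots,Z_t)$, so that $x_t,Y_t,d_t,x_{t+1}$ are $\mathcal H_t$-measurable. Subtracting $\nabla F_S(x_{t+1})$ from \eqref{eq:exact-mocop-tracker} gives
\[
E_{t+1}=(1-\gamma_{t+1})E_t+\gamma_{t+1}\zeta_{t+1}+(1-\gamma_{t+1})W_{t+1}.
\]
Here $\zeta_{t+1}:=\nabla F_{Z_{t+1}}(x_{t+1})-\nabla F_S(x_{t+1})$ and
\[
W_{t+1}:=\bigl(\nabla F_{Z_{t+1}}(x_{t+1})-\nabla F_{Z_{t+1}}(x_t)\bigr)-\bigl(\nabla F_S(x_{t+1})-\nabla F_S(x_t)\bigr).
\]
Both $\zeta_{t+1}$ and $W_{t+1}$ have conditional mean zero given $\mathcal H_t$, so the cross terms with $E_t$ vanish. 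For the remaining pieces I will use $\|a+b\|^2\le 2\|a\|^2+2\|b\|^2$ on $\gamma\zeta+(1-\gamma)W$. The term $\E[\|W_{t+1}\|_F^2\mid\mathcal H_t]$ is bounded by the second moment of the batch difference, hence by $ML^2\alpha_t^2\|d_t\|^2$ via \eqref{eq:momentum-matrix-smoothness}. The result is
\[
e_{t+1}\le(1-\gamma_{t+1})^2e_t+2\gamma_{t+1}^2\bar\sigma^2+2ML^2\alpha_t^2\E\|d_t\|^2 .
\]

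Next I take the Lyapunov energy $\Phi_t:=\E[f_{S,m}(x_t)]+\frac{\kappa}{\alpha_{t-1}}e_t$, with the convention $\alpha_{-1}:=\alpha_0$; this accounts for the $\kappa e_0/\alpha_0$ term in \eqref{eq:exact-mocop-ps-bound}. Using \eqref{eq:momentum-one-step-descent}, the increment is bounded by
\[
-\frac{\alpha_t}{4}\E\|d_t\|^2+\frac{\alpha_t}{2}e_t+\frac{\kappa}{\alpha_t}e_{t+1}-\frac{\kappa}{\alpha_{t-1}}e_t .
\]
Substituting the recursion, the coefficient of $e_t$ is at most
\[
\frac{\alpha_t}{2}+\kappa\Bigl(\frac{1-\gamma_{t+1}}{\alpha_t}-\frac{1}{\alpha_{t-1}}\Bigr)
=\frac{\alpha_t}{2}+\kappa\Bigl(\frac1{\alpha_t}-\frac1{\alpha_{t-1}}\Bigr)-\kappa c_\gamma\alpha_t,
\]
where I drop one factor of $(1-\gamma_{t+1})\le1$ and use $\gamma_{t+1}/\alpha_t=c_\gamma\alpha_t$.

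The main obstacle is the inverse-stepsize increment, which I want to bound by $\frac1{\alpha_t}-\frac1{\alpha_{t-1}}\le c_\nu\alpha_t$. This is where $c_\nu=\nu2^\nu/\alpha_0^2$ enters. By concavity of $s\mapsto s^\nu$,
\[
(t+1)^\nu-t^\nu\le \nu t^{\nu-1}\le \nu 2^{1-\nu}(t+1)^{\nu-1}.
\]
I must then check that $(t+1)^{\nu-1}\le (t+1)^{-2\nu}\cdot$const, which holds because $\nu\le 1/2$ gives $\nu-1\le-2\nu$ up to the constant. Matching the constant exactly to $\nu2^\nu$ may need care, and the case $t=0$ is handled by the convention $\alpha_{-1}=\alpha_0$. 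With this bound the coefficient becomes at most $\alpha_t/2-\kappa(c_\gamma-c_\nu)\alpha_t=-\alpha_t/2$, since $\kappa=(c_\gamma-c_\nu)^{-1}$.

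It remains to handle the $\|d_t\|^2$ term and the noise. The coefficient of $\E\|d_t\|^2$ is $-\alpha_t/4+2\kappa ML^2\alpha_t$. Because $c_\gamma-c_\nu\ge16ML^2$, we have $\kappa\le 1/(16ML^2)$, so this coefficient is at most $-\alpha_t/8$. The smallness condition on $\alpha_0$ is what makes $c_\gamma\le1/\alpha_0^2$ compatible with $c_\gamma\ge c_\nu+16ML^2$, and $c_\gamma\le1/\alpha_0^2$ in turn gives $\gamma_{t+1}\le1$. The noise term is $2\kappa\gamma_{t+1}^2\bar\sigma^2/\alpha_t=2\kappa c_\gamma^2\alpha_t^3\bar\sigma^2$. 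Telescoping then gives
\[
\sum_t\alpha_t\Bigl(\tfrac18\E\|d_t\|^2+\tfrac12 e_t\Bigr)\le f_{S,m}(x_0)-f_{S,m}^{\inf}+\frac{\kappa e_0}{\alpha_0}+2\kappa c_\gamma^2\bar\sigma^2\sum_t\alpha_t^3 .
\]
Applying \eqref{eq:momentum-residual-transfer} and the law of $\tau$ yields \eqref{eq:exact-mocop-ps-bound} with constant $16\le20$.

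For the rates, $A_T=\Theta(T^{1-\nu})$ and $\sum_t\alpha_t^3=\sum_t\alpha_0^3(t+1)^{-3\nu}$. This sum is $O(\log T)$ when $\nu=1/3$ and $O(1)$ when $\nu>1/3$, which gives $\widetilde O(T^{-2/3})$ and $O(T^{-(1-\nu)})$ respectively.
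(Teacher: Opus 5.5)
Your proposal is correct and follows the paper's proof essentially step for step. It uses the same STORM error identity (your grouping $\gamma_{t+1}\zeta_{t+1}+(1-\gamma_{t+1})W_{t+1}$ equals the paper's $\gamma_{t+1}\zeta_{t+1}(x_t)+\zeta_{t+1}(x_{t+1})-\zeta_{t+1}(x_t)$), the same recursion for $e_{t+1}$, the same Lyapunov energy $\E[f_{S,m}(x_t)]+\kappa e_t/\alpha_{t-1}$, and the same coefficient checks, and your constant $16$ is indeed at most $20$.

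One slip needs fixing in the step-ratio bound. Since $c_\nu\alpha_t=\nu 2^\nu\alpha_0^{-1}(t+1)^{-\nu}$, the target exponent is $-\nu$, not $-2\nu$; your inequality $\nu-1\le-2\nu$ would require $\nu\le1/3$. For $t\ge1$ one has $\nu2^{1-\nu}(t+1)^{\nu-1}=\nu2^{1-\nu}(t+1)^{2\nu-1}(t+1)^{-\nu}\le\nu2^{\nu}(t+1)^{-\nu}$, because $2\nu-1\le0$ and $t+1\ge2$, so the constant matches exactly. Also state that $\alpha_t\le\alpha_0\le1/(4L)$, so Corollary~\ref{lem:momentum-descent-transfer} applies.
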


Note that for $\nu\in[1/3,1/2]$, $\nu2^\nu\leq1/\sqrt2<1$. The bound on $\alpha_0$ ensures $c_\nu+16ML^2\leq1/\alpha_0^2$, so the interval for $c_\gamma$ is nonempty; it also implies $\alpha_0\leq1/(2L)$, as required by Corollary~\ref{lem:momentum-descent-transfer}.

\begin{proof}
Fix $m\in[M]$ and set $\alpha_{-1}:=\alpha_0$.
Let $\mathcal H_t:=\sigma(Z_0,\ldots,Z_t)$ be the history before
Algorithm~\ref{alg:exact-mocop} samples $Z_{t+1}$. The quantities
$x_t,Y_t,d_t$, and $x_{t+1}$ are $\mathcal H_t$-measurable, and both
gradient evaluations use the same new batch. All expectations below are
finite by the finite-support argument in the proof of
Theorem~\ref{thm:exact-moco}.
Recall from \eqref{eq:momentum-tracker-notation} that
$E_t=Y_t-\nabla F_S(x_t)$ and $e_t=\E[\|E_t\|_F^2]$.
Define $\zeta_{t+1}(x):=\nabla F_{Z_{t+1}}(x)-\nabla F_S(x)$.
By \eqref{eq:momentum-uniform-oracle}, this noise has conditional mean
zero and conditional second moment at most $\bar\sigma^2$ at both
$x=x_t$ and $x=x_{t+1}$.

Subtracting $\nabla F_S(x_{t+1})$ from
\eqref{eq:exact-mocop-tracker} yields
\begin{equation}
 E_{t+1}
 =(1-\gamma_{t+1})E_t
 +\gamma_{t+1}\zeta_{t+1}(x_t)
 +\zeta_{t+1}(x_{t+1})-\zeta_{t+1}(x_t).
 \label{eq:exact-mocop-error-identity}
\end{equation}
The noise difference is the centered stochastic gradient difference
conditional on $\mathcal H_t$. Consequently,
\begin{align}
&\E\big[\|\zeta_{t+1}(x_{t+1})-\zeta_{t+1}(x_t)\|_F^2
      \mid\mathcal H_t\big] \notag\\
&\leq  \E\big[\|\nabla F_{Z_{t+1}}(x_{t+1})
            -\nabla F_{Z_{t+1}}(x_t)\|_F^2\mid\mathcal H_t\big]
&&\text{by conditional variance} \notag\\
&\leq  ML^2\alpha_t^2\|d_t\|^2
&&\text{by \eqref{eq:momentum-matrix-smoothness} and \eqref{eq:momentum-exact-update}}.
\label{eq:exact-mocop-difference-bound}
\end{align}
In \eqref{eq:exact-mocop-error-identity}, the sum of the noise terms has
conditional mean zero given $\mathcal H_t$. Taking conditional second
moments and then total expectations gives
\begin{align}
e_{t+1} &= (1-\gamma_{t+1})^2e_t
+\E\big[\|\gamma_{t+1}\zeta_{t+1}(x_t)
        +\zeta_{t+1}(x_{t+1})-\zeta_{t+1}(x_t)\|_F^2\big]
&&\text{by \eqref{eq:exact-mocop-error-identity}} \notag\\
&\leq  (1-\gamma_{t+1})^2 e_t
+2\gamma_{t+1}^2\bar\sigma^2
+2ML^2\alpha_t^2\E\big[\|d_t\|^2\big]
&&\text{by \eqref{eq:momentum-uniform-oracle} and \eqref{eq:exact-mocop-difference-bound}}.
\label{eq:exact-mocop-tracker-recursion}
\end{align}
The last inequality also uses $\|A+B\|_F^2\leq2\|A\|_F^2+2\|B\|_F^2$
for matrices of the same dimensions.

Recall that $\alpha_t=\alpha_0(t+1)^{-\nu}$ and
$c_\nu=\nu 2^\nu/\alpha_0^2$. For $t\geq1$, we have
\begin{align}
\frac1{\alpha_t}-\frac1{\alpha_{t-1}}
&=\frac{(t+1)^\nu-t^\nu}{\alpha_0} \notag\\
&\leq  \frac{\nu}{\alpha_0}t^{\nu-1}
&&\text{by concavity of $u\mapsto u^\nu$} \notag\\
&\leq  \frac{\nu 2^\nu}{\alpha_0}(t+1)^{-\nu}
=c_\nu\alpha_t
&&\text{since $t+1\leq2t$ ~~and~~ $\nu\leq 1/2$}.
\label{eq:exact-mocop-step-ratio}
\end{align}
Indeed, $t^{\nu-1}(t+1)^\nu\leq2^{\nu}t^{2\nu-1}\leq2^\nu$.
The same bound holds at $t=0$ under the convention $\alpha_{-1}=\alpha_0$.

Recall that $\gamma_{t+1}=c_\gamma\alpha_t^2$ and
$\kappa=(c_\gamma-c_\nu)^{-1}>0$. Define the Lyapunov energy
$\mathcal W_t:=\E[f_{S,m}(x_t)]+\kappa e_t/\alpha_{t-1}$.
The inverse stepsize weight makes the estimator contraction proportional
to $\alpha_t$, since $\gamma_{t+1}/\alpha_t=c_\gamma\alpha_t$.
Taking expectations in \eqref{eq:momentum-one-step-descent} of
Corollary~\ref{lem:momentum-descent-transfer}, we obtain
\begin{align}
&\mathcal W_{t+1}-\mathcal W_t \notag\\
&\leq-\frac{\alpha_t}{4}\E\big[\|d_t\|^2\big]
  +\frac{\alpha_t}{2}e_t
  +\frac{\kappa e_{t+1}}{\alpha_t}
  -\frac{\kappa e_t}{\alpha_{t-1}}
&&\text{by Corollary~\ref{lem:momentum-descent-transfer}} \notag\\
&\leq  -\alpha_t\left(\frac14-2\kappa ML^2\right)\E\big[\|d_t\|^2\big]
  +\left[\frac{\alpha_t}{2}
    +\kappa\left(\frac{(1-\gamma_{t+1})^2}{\alpha_t}
                -\frac1{\alpha_{t-1}}\right)\right]e_t
  +\frac{2\kappa\gamma_{t+1}^2}{\alpha_t}\bar\sigma^2
&&\text{by \eqref{eq:exact-mocop-tracker-recursion}} \notag\\
&\leq  -\alpha_t\left(\frac14-2\kappa ML^2\right)\E\big[\|d_t\|^2\big]
  +\left[\frac{\alpha_t}{2}
    +\kappa\left(\frac1{\alpha_t}-\frac1{\alpha_{t-1}}\right)
    -\kappa c_\gamma\alpha_t\right]e_t
  +2\kappa c_\gamma^2\alpha_t^3\bar\sigma^2
&&\text{by $\gamma_{t+1}=c_\gamma\alpha_t^2$} \notag\\
&\leq  -\alpha_t\left(\frac14-2\kappa ML^2\right)\E\big[\|d_t\|^2\big]
  -\alpha_t\left(\kappa(c_\gamma-c_\nu)-\frac12\right)e_t
  +2\kappa c_\gamma^2\alpha_t^3\bar\sigma^2
&&\text{by \eqref{eq:exact-mocop-step-ratio}} \notag\\
&\leq  -\frac{\alpha_t}{8}\E\big[\|d_t\|^2\big]
  -\frac{\alpha_t}{2}e_t+2\kappa c_\gamma^2\alpha_t^3\bar\sigma^2
&&\text{see below}.
\label{eq:exact-mocop-lyapunov-descent}
\end{align}
The third inequality also uses $(1-\gamma_{t+1})^2\leq1-\gamma_{t+1}$.
This holds because the parameter conditions in the statement of
Theorem~\ref{thm:exact-mocop} give
$0<\gamma_{t+1}=c_\gamma\alpha_t^2\leq c_\gamma\alpha_0^2\leq1$.
For the last inequality, $c_\gamma-c_\nu\geq16ML^2$ implies
$\kappa ML^2=ML^2/(c_\gamma-c_\nu)\leq1/16$, while
$\kappa(c_\gamma-c_\nu)=1$ by definition. Hence,
$\frac14-2\kappa ML^2\geq\frac18$ and
$\kappa(c_\gamma-c_\nu)-\frac12=\frac12$.

Summing \eqref{eq:exact-mocop-lyapunov-descent} over $t=0,\ldots,T-1$,
using $\mathcal W_T\geq f_{S,m}^{\inf}$ and
$\mathcal W_0=f_{S,m}(x_0)+\kappa e_0/\alpha_0$, and applying the
stationarity bound \eqref{eq:momentum-residual-transfer} gives
\begin{align}
 \sum_{t=0}^{T-1}\alpha_t
 \left(\frac18\E\big[\|d_t\|^2\big]+\frac12e_t\right)
 &\leq f_{S,m}(x_0)-f_{S,m}^{\inf}
 +\frac{\kappa e_0}{\alpha_0}
 +2\kappa c_\gamma^2\bar\sigma^2\sum_{t=0}^{T-1}\alpha_t^3
 &&\text{by telescoping}
 \label{eq:exact-mocop-joint-bound}\\
 \frac1{A_T}\sum_{t=0}^{T-1}\alpha_t\E\big[R_S(x_t)\big]
 &\leq\frac2{A_T}\sum_{t=0}^{T-1}\alpha_t
 \left(\E\big[\|d_t\|^2\big]+e_t\right)
 &&\text{by Corollary~\ref{lem:momentum-descent-transfer}} \notag\\
 &\leq\frac{20}{A_T}\left[
 f_{S,m}(x_0)-f_{S,m}^{\inf}+\frac{\kappa e_0}{\alpha_0}
 +2\kappa c_\gamma^2\bar\sigma^2\sum_{t=0}^{T-1}\alpha_t^3\right]
 &&\text{by \eqref{eq:exact-mocop-joint-bound}}.
 \label{eq:exact-mocop-residual-sum}
\end{align}
By the sampling distribution of $\tau$, the weighted average equals
$\E[R_S(x_\tau)]$, proving \eqref{eq:exact-mocop-ps-bound}.
Finally, $e_0\leq\bar\sigma^2$ by \eqref{eq:momentum-uniform-oracle}.
For $\nu=1/3$, $A_T=\Theta(T^{2/3})$ and
$\sum_{t=0}^{T-1}\alpha_t^3=O(1+\log T)$, which yields
$\widetilde O(T^{-2/3})$. For $1/3<\nu\leq1/2$,
$A_T=\Theta(T^{1-\nu})$ and $\sum_{t=0}^{\infty}\alpha_t^3<\infty$,
which yields $O(T^{-(1-\nu)})$. In both cases, the rates bound the expected
squared empirical PS measure at the algorithm's output.
\end{proof}

For fixed $|Z_t|$, Algorithm~\ref{alg:exact-moco} requires
$|Z_t|(T+1)$ stochastic gradient matrix evaluations.
Algorithm~\ref{alg:exact-mocop} requires $|Z_t|(2T+1)$ such evaluations
because each corrected update evaluates gradients at two points using the
same batch. Both evaluation counts are $\Theta(T)$. Consequently, the
convergence exponents in Theorems~\ref{thm:exact-moco}--\ref{thm:exact-mocop}
are unchanged when the rates are expressed in terms of the number of
stochastic gradient evaluations. For $M=1$, the MoCo estimator is an
exponentially weighted stochastic gradient, and the MoCo+ estimator follows
the STORM recursion.

\section{Proof discovery process}
\label{app:proof-discovery}

The proof in the main text was discovered while the author was designing homework for a graduate course on multi-objective machine learning.
Problem~1, ``MGDA,'' asked students to derive the MGDA update equation and the associated properties that could be useful in solving Problem~2.
Problem~2, ``Stochastic MGDA,'' asked students to analyze stochastic MGDA under smoothness, conditional unbiasedness, bounded conditional variance, and an increasing mini-batch schedule. The box below records the author-created problem statements supplied to ChatGPT together with the prompt.

The initial expectation was that the model would recover an argument from closely related papers~\citep{svaiter2018holder,chen2024threeway}. It instead returned a different route based on the stated problem and surrounding context.
In response to the prompt, ChatGPT~5.4 TE proposed the key route of combining stochastic common descent with the Lipschitz continuity of the PS measure so that the mini-batch error enters through its second moment.
The author subsequently verified, corrected, and reorganized the proof.

The homework excerpt below uses the notation of
Section~\ref{sec:problem-setting}: $p$ is the parameter dimension,
$M$ is the number of objectives, $[M]=\{1,\ldots,M\}$, and $S$ is the
training dataset. For objective $m$, $f_{z,m}$ is the loss on datum $z$,
$f_{S,m}$ is its empirical average, and $\nabla F_S$ collects the empirical
gradients as columns. The symbols $x_t$ and $\alpha_t>0$ denote the iterate
and stepsize, $L$ is the gradient Lipschitz constant, and $\mathbf 1$ is the
all-ones vector. Empirical Pareto stationarity means $R_S(x)=0$, with $R_S$
defined in \eqref{eq:ps-measure}.

\begin{promptbox}
\textbf{Homework input supplied to ChatGPT (problem statements only).} The following statements were supplied to ChatGPT as an attached homework LaTeX file together with the prompt. Their notation is aligned with this manuscript.

\smallskip
\subsection*{Problem 1: MGDA}
For the multi-objective optimization problem $\min_{x\in\R^p} F_S(x):=[f_{S,1}(x),\ldots,f_{S,M}(x)]^\top$ with differentiable $F_S$,
the multi-gradient descent algorithm (MGDA) update is
\begin{align}
&d_{\nabla F_S(x)}:=\mathop{\arg\min}_{d\in\R^p}\;\max_{m\in[M]}
\left\{\langle \nabla f_{S,m}(x),d\rangle+\tfrac{1}{2}\|d\|^2\right\}, \notag\\
&x_{t+1}=x_t+\alpha_t d_{\nabla F_S(x_t)}.
\end{align}
Prove the following: \\
(1) (10 pts) the update direction $d_{\nabla F_S(x)}$ can be computed as a weighted combination of the gradients, i.e., $d_{\nabla F_S(x)}=-\nabla F_S(x)\lambda^\star_{\nabla F_S(x)}$,
where $\lambda^\star_{\nabla F_S(x)}\in\Delta^M:=\{\lambda\in\R^M\mid\lambda\geq 0,\ \mathbf{1}^\top\lambda=1\}$;
state how $\lambda^\star_{\nabla F_S(x)}$ can be obtained from $\nabla F_S(x)$; \\
(2) (20 pts) if $f_{S,m}(x)$ is $L$-smooth for all $m\in[M]$, the direction $d_{\nabla F_S(x)}$ is H{\"o}lder continuous with respect to $x$, i.e.,
$\|d_{\nabla F_S(x_1)}-d_{\nabla F_S(x_2)}\|\leq c\|x_1-x_2\|^{\frac{1}{2}}$ for some $c>0$; \\
(3) (15 pts) if $f_{S,m}(x)$ is $L$-smooth for all $m\in[M]$, the norm of the direction $\|d_{\nabla F_S(x)}\|$ is Lipschitz continuous with respect to $x$, i.e.,
$\big|\|d_{\nabla F_S(x_1)}\|-\|d_{\nabla F_S(x_2)}\|\big|\leq c\|x_1-x_2\|$ for some $c>0$. \\
\subsection*{Problem 2: Stochastic MGDA}
(30 pts) For the stochastic variant of MGDA,
we obtain the mini-batch gradient matrix $Q_t:=\nabla F_{Z_t}(x_t)$ with $Z_t$ denoting the mini-batch data at each iteration $t$.
Assume $f_{z,m}(x)$ is $L$-smooth for every $z\in S$ and $m\in[M]$,
and the mini-batch size $|Z_t|$ increases with $t$,
i.e., $|Z_t|= \Theta(t+1)$,
prove the convergence of this stochastic variant of MGDA to empirical Pareto stationarity for $F_S$, and state the convergence rate.
\smallskip
\vspace{5mm}
\textbf{Prompt to ChatGPT~5.4 TE.}
\begin{quote}
\small\ttfamily
create homework3 solutions and modify this tex file with completed solutions
\end{quote}
\end{promptbox}

\section{Student homework submissions summary}
\label{app:student-submissions}

The preceding section explains how the proof in this note emerged from the homework prompt. This section compares the six sampled graded submissions, focusing on their proof routes and whether the claimed convergence rates are
correctly justified.

\subsection{Proof routes and correctness}

Table~\ref{tab:student-homework-summary} summarizes the six graded submissions for Problems~1 and~2, with details in Appendix~\ref{app:proof-discovery}. The submissions are anonymized as S1--S6 to protect student privacy. Five records document an LLM version and at least one prompt. All six submissions correctly identified the minimum-norm simplex quadratic program in Problem~1(1), and most correctly proved the other properties in Problem~1, while the main differences arose in the proofs for Problem~2, concerning the convergence of stochastic MGDA.

\begingroup
\uniformtableformat
\scriptsize
\setlength{\tabcolsep}{4pt}
\setlength{\LTcapwidth}{\textwidth}
\begin{longtable}{m{0.05\textwidth}m{0.1\textwidth}
  >{\centering\arraybackslash}m{0.055\textwidth}
  >{\centering\arraybackslash}m{0.14\textwidth}m{0.545\textwidth}}
\caption{Anonymous records for the six homework submissions. A \cmark\ marks the proof route used in this note. In the rate column, ``\emph{Claimed}'' means the derivation does not justify the stated rate, ``\emph{Proved}'' means it does under $|Z_t|=\Theta(t+1)$, ``\emph{Incomplete}'' means a rate calculation is started but not completed, and ``\emph{No rate}'' means no final rate is stated.}
\label{tab:student-homework-summary}\\[6pt]
\toprule
Record & LLM disclosure & \makecell[c]{Same\\route?} & \makecell[c]{Final rate for\\$|Z_t|=\Theta(t+1)$} & Assessed stochastic proof route \\
\midrule
\endfirsthead
\toprule
Record & LLM disclosure & \makecell[c]{Same\\route?} & \makecell[c]{Final rate for\\$|Z_t|=\Theta(t+1)$} & Assessed stochastic proof route \\
\midrule
\endhead
\endfoot
\bottomrule
\endlastfoot
S1 & ChatGPT~5.4 Pro & \xmark & \makecell[c]{Claimed:\\$O(T^{-1/4})$} & Uses the $(1/2)$-H\"older continuity of $d_Q$ before telescoping, but the assumptions are incomplete. \\
\midrule
S2 & \xmark & \xmark & \makecell[c]{Claimed:\\$O(T^{-1/2})$} & Assumes a Lipschitz MGDA direction without the required conditioning assumptions. \\
\midrule
S3 & ChatGPT~5.4 TE & \cmark & Incomplete & Uses the same route as this note, but the rate calculation is incomplete. \\
\midrule
S4 & Gemini Pro & \cmark & \makecell[c]{Proved:\\$\widetilde O(T^{-1})$} & Uses the same route as this note, and correctly proves the result. \\
\midrule
S5 & ChatGPT~5.2 Plus & \xmark & No rate & Uses $\alpha_t=\alpha_0/(t+1)$ and telescopes a descent inequality for the stochastic direction to obtain asymptotic convergence of the minimum PS measure over the iterates. It gives no finite-time rate and assumes an accumulation point without proving boundedness. \\
\midrule
S6 & ChatGPT~5 & \xmark & \makecell[c]{Claimed:\\$\widetilde O(T^{-1/2})$} & Applies deterministic common descent to the stochastic update, adds an $O(\alpha_t\E\|E_t\|_F)$ error term, and telescopes with $\alpha_t=t^{-1/2}$ to claim the rate. The response attempts to prove Lipschitz continuity of $x\mapsto r(\nabla F(x))$, rather than the required map $Q\mapsto r(Q)$. The argument is incomplete. \\
\end{longtable}
\endgroup

\begin{samepage}
\subsection{Main observations}

The comparison highlights three points that connect the submissions to the present theorem.

\begin{itemize}
\setlength{\itemsep}{2pt}
\setlength{\parskip}{0pt}
\setlength{\parsep}{0pt}
\item \textbf{Successful route.} S3 and S4, which used ChatGPT~5.4 TE
and Gemini Pro, respectively, obtained the same proof route as this note.
S3's final convergence-rate calculation was incomplete,
whereas S4 obtained the same rate as this note.
\item \textbf{Useful context.} Problem~1(3) already asked for Lipschitz continuity of $\|d_{\nabla F_S(x)}\|$ with respect to $x$.
Although this slightly differs from the 
continuity of $\|d_Q\|$ with respect to $Q$
in Lemma~\ref{lem:ps-measure-lipschitz},
it may have provided relevant context to the LLMs.
In this sample, two LLM-assisted submissions assembled the same proof route 
as this note.
\item \textbf{``Better'' model.} 
ChatGPT~5.4 TE and Gemini Pro performed the best among the sampled homework submissions.
This suggests models with extended and deeper ``thinking ability'' may perform better. However, the sample size is insufficient for a conclusion.
\end{itemize}
\end{samepage}

\subsection{Recurring mistakes in LLM-assisted derivations}

The incomplete or incorrect arguments fail at the following specific steps, and the record labels and model disclosures match Table~\ref{tab:student-homework-summary}.

\begin{itemize}
\setlength{\itemsep}{2pt}
\setlength{\parskip}{0pt}
\setlength{\parsep}{0pt}
\item \textbf{Stochastic descent (S6: ChatGPT~5).} The solution applies deterministic common descent to the stochastic update without deriving the resulting error term. Unbiased gradients do not imply $\E[d_{Q_t}\mid x_t]=d_{\nabla F_S(x_t)}$ because $d_Q$ depends nonlinearly on $Q$ and because the mini-batch simplex minimizer and $E_t$ depend on the same sample.
\item \textbf{Regularity (S2: no LLM disclosed and S6: ChatGPT~5).} S2 assumes a Lipschitz MGDA direction without the required conditioning assumptions, although it is generally only $(1/2)$-H\"older continuous. S6's argument uses $x\mapsto r(\nabla F(x))$ and does not prove the required Lipschitz continuity of $Q\mapsto r(Q)$.
\item \textbf{Assumptions and rates (S1: ChatGPT~5.4 Pro and S3: ChatGPT~5.4 TE).} S1 uses the $(1/2)$-H\"older continuity of $d_Q$ to claim an $O(T^{-1/4})$ rate but does not state all assumptions needed for its bounds. S3 follows the proof route of this note but does not complete the rate calculation.
\item \textbf{Asymptotic conclusion (S5: ChatGPT~5.2 Plus).} The solution uses $\alpha_t=\alpha_0/(t+1)$ to obtain asymptotic convergence of the minimum PS measure over the iterates, but gives no finite-time rate and assumes an accumulation point without proving boundedness.
\end{itemize}

\newpage

\printbibliography

\end{document}